\documentclass[11pt]{amsart}

\usepackage[T1]{fontenc}
\usepackage{lmodern}
\usepackage{amssymb,mathtools,bm}
\usepackage{booktabs}
\usepackage{graphicx}
\usepackage{float}
\usepackage{tikz-cd}
\usepackage{xcolor}
\usepackage{placeins}
\usepackage{microtype}
\usepackage{enumitem}
\usepackage[numbers,sort&compress]{natbib}
\usepackage[colorlinks=true,linkcolor=blue!45!black,
  citecolor=green!35!black,urlcolor=blue!55!black]{hyperref}
\usepackage[nameinlink,noabbrev]{cleveref}

\graphicspath{{figures/}}
\setlist{nosep}
\allowdisplaybreaks
\numberwithin{equation}{section}

\newtheorem{theorem}{Theorem}[section]
\newtheorem{lemma}[theorem]{Lemma}

\newtheorem{corollary}[theorem]{Corollary}
\theoremstyle{definition}

\newcommand{\R}{\mathbb R}
\newcommand{\N}{\mathbb N}
\newcommand{\dd}{\mathrm d}
\newcommand{\id}{\mathrm{id}}
\newcommand{\curl}{\operatorname{curl}}

\newcommand{\norm}[2][]{\left\lVert #2\right\rVert_{#1}}
\newcommand{\inner}[3][]{\left(#2,#3\right)_{#1}}

\title[Finite Element de Rham complexes on Sparse Grids]
{Higher-Order Finite Element de Rham Complexes on Sparse Grids}
\author{Yihanqi Hu}
\address{School of Mathematical Sciences, Zhejiang University, 866 Yuhangtang Road,
Hangzhou 310058, Zhejiang, People's Republic of China}
\email{huyihanqi@zju.edu.cn}
\author{Yuwen Li}
\address{School of Mathematical Sciences, Zhejiang University, 866 Yuhangtang Road,
Hangzhou 310058, Zhejiang, People's Republic of China}
\email{liyuwen@zju.edu.cn}
\date{}
\subjclass[2020]{65N30, 65D40, 41A63}

\keywords{sparse grid, finite element differential form, de Rham complex,
Alpert multiwavelet, commuting projection}

\begin{document}

\begin{abstract}
We construct, for the first time, a family of higher-order finite element differential forms on tensor-product sparse grids.  The construction starts from
compatible one-dimensional spaces of continuous piecewise polynomials and
discontinuous piecewise polynomials of one degree lower, linked by
differentiation in each coordinate direction. Their hierarchical decompositions combine Alpert multiwavelets with their integrated counterparts. 
We establish commuting canonical interpolation operators and corresponding approximation error bounds under mixed Sobolev regularity.
For the sparse-grid de Rham complex of arbitrary polynomial degree on the unit cube in arbitrary dimension, we also prove exactness, polynomial-degree-robust
stable discrete potentials, and an $H(\dd)$-bounded commuting projection by developing a novel stable homotopy operator.
Numerical experiments for
curl-curl source problems on a cube and a Maxwell eigenproblem on a square-annulus
illustrate the effectiveness of the proposed higher-order sparse-grid method.
\end{abstract}

\maketitle

\section{Introduction}
\label{sec:introduction}

Tensor-product finite elements are a natural choice on Cartesian meshes:
their local spaces, interpolation operators, and discrete differential
operators inherit a transparent tensor-product structure from one dimension. They inevitably suffer from the curse of dimensionality.  If the mesh size in each direction is $h_n=O(2^{-n})$, a full-grid tensor-product finite element space would have $O(h_n^{-d})$ degrees of freedom (DoFs) in
dimension $d$.  Sparse-grid methods drastically reduce the number of DoFs to
$O(h_n^{-1}|\log h_n|^{d-1})$ through hyperbolic-cross approximation.  For functions with sufficient mixed Sobolev regularity, this
reduction preserves the approximation power of full-grid finite elements up to logarithmic factors; see, e.g., \cite{BungartzGriebel2004,DungTemlyakovUllrich2018} for sparse-grid continuous finite elements and \cite{ShenYu2010} for a sparse-grid spectral method.

Sparse-grid discretization is
well established for scalar-valued function approximation, but its extension to vector fields in $H(\curl)$ and $H(\operatorname{div})$ remains underdeveloped. Because of the delicate numerical stability requirements, a naive componentwise sparse-grid discretization of these vector fields can lead to well-known spurious numerical solutions for curl-curl source and eigenvalue problems on domains with holes or reentrant corners. In \cite{GradinaruHiptmair2003,GradinaruHiptmair2003Multigrid}, Gradinaru and Hiptmair developed sparse-grid finite elements in $H(\curl)$, $H(\operatorname{div})$, and, more generally, spaces $H\Lambda^l$ of differential $l$-forms. Their construction is restricted to the lowest polynomial order and hinges on the decisive 1D hat--Haar relation:
\[
\frac{\dd\phi_i}{\dd x}=\psi_i,
\]
where $\phi_i$ is a hierarchical hat function associated with the $i$th grid node $a_i$, and $\psi_i$ is its derivative, a piecewise-constant Haar wavelet. At higher polynomial degree, replacing hat functions and Haar
wavelets by arbitrary hierarchical polynomial bases does not retain this
property. Besides sparse grid, several serendipity conforming finite elements on cubical meshes can achieve milder reduction of DoFs \cite{ArnoldAwanou2011,ArnoldAwanou2014,ArnoldBoffiBonizzoni2015}.

In a parallel development, Alpert multiwavelets \cite{Alpert1993} have
been used to construct high-order sparse-grid discontinuous Galerkin (DG) spaces for
elliptic and transport problems
\citep{WangTangGuoCheng2016,GuoCheng2016,GuoCheng2017}. Sparse-grid DG methods have also been developed for solving Maxwell and high-dimensional Maxwell--Vlasov equations \citep{DAzevedoGreenMu2020,TaoGuoCheng2019}.  Their spaces, however, are based on scalar or componentwise sparse-grid DG approximation for vector fields
and do not preserve the structure of the de Rham complex at the discrete level. In such cases, the numerical stability of discontinuous sparse-grid methods is achieved by adding facewise interior-penalty stabilization terms.

The purpose of this paper is to construct and analyze higher-order conforming sparse-grid discretizations that preserve structural properties of the de Rham complex, including exactness and the existence of stable discrete potentials. Given a dyadic 1D mesh $\mathcal{T}_m$ in each direction, we
start from the 1D finite element de Rham complex
\begin{equation}
  S_m^{k+1}
  \xrightarrow{\dd/\dd x}
  Q_m^{k},
  \label{eq:intro-1D-pair}
\end{equation}
where $S_m^{k+1}$ is the continuous piecewise-$\mathbb{P}_{k+1}$ space and
$Q_m^{k}$ is the discontinuous piecewise-$\mathbb{P}_k$ space.  The detail space
of $Q_m^{k}$ is represented by Alpert multiwavelets.  Integrating
them produces continuous detail functions in $S_m^{k+1}$ whose
derivatives are exactly the original Alpert functions. This is the
higher-order counterpart of the hat--Haar relation. Tensorizing the 1D details then yields multidimensional $H\Lambda^l$-conforming sparse-grid finite element differential forms in arbitrary dimension and form degree. We remark that the higher-order continuous hierarchy constructed here differs from the hierarchical Lagrange hierarchy given in \cite{BungartzGriebel2004}. It is difficult to pair the continuous 1D hierarchy in \cite{BungartzGriebel2004} with the discontinuous 1D hierarchy in \cite{Alpert1993} to derive a higher-order hat--Haar relation.

It is instructive to emphasize the
necessity of an abstract differential-form framework, namely finite element exterior calculus (FEEC) \citep{Hiptmair1999,ArnoldFalkWinther2006,Arnold2018}. First, sparse-grid methods are intended to solve PDEs on high-dimensional domains ($3\leq d\leq10$). Therefore, differential forms, their exterior derivatives, and their traces provide the natural language because the classical vector-calculus proxies for curl and tangential traces do not extend uniformly to arbitrary dimension. Bounded commuting
projections in FEEC are equally important: they transfer differential identities
from the continuous problem to the discrete one and underlie the numerical stability
of mixed methods and Maxwell discretizations. In fact, we shall construct bounded commuting
projections on sparse grids by leveraging a novel stable homotopy operator and a tensor trick from algebraic and differential topology.

The main contributions of this paper and steps of analysis are as follows.
\begin{enumerate}[label=(\roman*)]
  \item We give compatible canonical interpolation operators and
  hierarchical decompositions for the 1D spaces in
  \eqref{eq:intro-1D-pair}.  The integrated-Alpert basis
  preserves continuity and satisfies an exact derivative identity with the
  Alpert basis, so the canonical and hierarchical descriptions define the
  same commuting interpolant. The continuous 1D hierarchy is new and different from \cite{BungartzGriebel2004}.

  \item We tensorize the 1D construction for arbitrary dimension $d$ and form degree $l$. The polynomial degree is at most $k$ in every form direction and at most $k+1$ in every transverse direction. Using one
  common downward-closed sparse index set in every form degree yields a
  discrete subcomplex and a commuting sparse-grid interpolant.  Moreover, we
  prove approximation error bounds for the canonical commuting interpolants under mixed Sobolev regularity.

  \item On a unit cube, we construct a novel explicit tensor-product
  homotopy that preserves every sparse-grid finite element space.  It proves discrete exactness
  and provides discrete potentials that are uniformly stable with respect to both grid level and polynomial degree. 
  These potentials lead to an $H(\dd)$-bounded commuting projection. Our technique differs from that of \cite{GradinaruHiptmair2003} even at the lowest polynomial order.
\end{enumerate}

The paper is organized as follows.  Section~\ref{sec:spaces} introduces the
tensor-product spaces and constructs the compatible 1D
hierarchies.  Sections~\ref{sec:sparse-interpolation} and~\ref{sec:approximation} develop the
sparse interpolant and its mixed-regularity error analysis.
Section~\ref{sec:stability} establishes exactness, stable potentials, and the commuting
projection.  Section~\ref{sec:numerics} presents the
numerical experiments, followed by the concluding remarks in
Section~\ref{sec:conclusion}.

\section{Finite element differential forms and hierarchies}
\label{sec:spaces}

Let $\Omega=(0,1)^d$, $\bm x=(x_1,\ldots,x_d)$, $[d]:=\{1,\ldots,d\}$, and $\N_0$ be the set of nonnegative integers.  For an
interval $K$ and $r\in\N_0$, let
$\mathbb{P}_r(K)$ denote the polynomials of degree at most $r$ on $K$, with the
convention $\mathbb{P}_{-1}(K)=\{0\}$.  We use $\mathbb{P}_r(x_i)$ to denote the space of
univariate polynomials of degree at most $r$ in the coordinate $x_i$.  For any unordered index set $A\subseteq[d]$,
let $A^c=[d]\setminus A$, and let
\[
  \bm x_A=(x_i)_{i\in A},
  \qquad
  \dd\bm x_A=\prod_{i\in A}\dd x_i
\]
denote the corresponding coordinate subvector and product measure.
For an ordered multi-index set $I=\{i_1<\cdots<i_l\}\subseteq[d]$, let
\[
  \dd x_I=\dd x_{i_1}\wedge\cdots\wedge\dd x_{i_l}
\]
be the
coordinate differential form. Then
any $l$-form can be written as
\[
  \omega=\sum_{I\subseteq[d],\,|I|=l}u_I(\bm x)\,\dd x_I.
\]
The space of such forms with coefficients $u_I\in L^2(\Omega)$ is denoted by
$L^2\Lambda^l(\Omega)$. The $L^2$ norm for $l$-forms is
\[
  \norm[L^2\Lambda^l(\Omega)]{\omega}^2
  =\sum_{|I|=l}\norm[L^2(\Omega)]{u_I}^2.
\]
For a fixed component $u_I\dd x_I$, we refer to the coordinates indexed by
$i\in I$ as the form directions, since $\dd x_i$ occurs in $\dd x_I$, and
to those indexed by $j\in I^c$ as the transverse directions.
The exterior derivative is
\begin{equation*}
  \dd\omega
  =\sum_{|I|=l}\sum_{j\notin I}
  \partial_{x_j}u_I\,\dd x_j\wedge\dd x_I,
  \qquad \dd^2=0.
\end{equation*}
The domain Sobolev space of $\dd$ (in the weak sense) is
\[
  H\Lambda^l(\Omega)
  =\{\omega\in L^2\Lambda^l(\Omega):
      \dd\omega\in L^2\Lambda^{l+1}(\Omega)\},
\]
which is equipped with the graph norm
\[\norm[H(\dd,\Omega)]{\omega}^2
  =\norm[L^2\Lambda^l(\Omega)]{\omega}^2+\norm[L^2\Lambda^{l+1}(\Omega)]{\dd\omega}^2.\]
In vector proxies, $H\Lambda^0=H^1$,
$H\Lambda^1=H(\curl)$ in two and three dimensions, and
$H\Lambda^2=H(\operatorname{div})$ in three dimensions
\citep{ArnoldFalkWinther2006,Monk2003}.

\subsection{Cubical finite element forms}
On the reference cube $\widehat K=[0,1]^d$, the shape-function space of an $H\Lambda^l$-conforming finite element is
\begin{equation}
  \mathcal{Q}_{k+1}^{-}\Lambda^l(\widehat K)
  =
  \bigoplus_{\substack{I\subseteq[d]\\ |I|=l}}
  \left(\bigotimes_{i\in I}\mathbb{P}_k(x_i)\right)
  \otimes
  \left(\bigotimes_{j\notin I}\mathbb{P}_{k+1}(x_j)\right)
  \dd x_I.
  \label{eq:trimmed-tensor-space}
\end{equation}
Thus a coordinate in a form direction has degree $k$, whereas a transverse
coordinate has degree $k+1$.  This is the tensor-product space of discrete
polynomial differential forms described in finite element
exterior calculus  \citep{ArnoldBoffiBonizzoni2015,GilletteKloefkornSanders2019}.  The lowest-order
case $k=0$ is the tensor-product Whitney family underlying the sparse-grid
constructions of
\citet{GradinaruHiptmair2003,GradinaruHiptmair2003Multigrid}.  In two dimensions,
\begin{equation*}
  \mathcal{Q}_{k+1}^{-}\Lambda^1(\widehat K)
  =
  \bigl(\mathbb{P}_k(x_1)\otimes\mathbb{P}_{k+1}(x_2)\bigr)\dd x_1
  \oplus
  \bigl(\mathbb{P}_{k+1}(x_1)\otimes\mathbb{P}_k(x_2)\bigr)\dd x_2,
\end{equation*}
whose vector proxy is the rectangular edge finite element \citep{Nedelec1980,Monk2003}. 

For completeness, we briefly explain the degrees of freedom assigned to the shape function space in \eqref{eq:trimmed-tensor-space}.  Fix a multi-index
$I$, and let
$R$ range over all index sets satisfying $I\subseteq R\subseteq[d]$.  The
coordinates in $R$ vary on an $|R|$-dimensional coordinate face.
For $\bm\epsilon=(\epsilon_j)_{j\in R^c}\in\{0,1\}^{R^c}$, define
\[
  F_{R,\bm\epsilon}
  :=\{\bm x\in[0,1]^d:x_j=\epsilon_j\text{ for every }j\in R^c\}.
\]
On that face choose
\begin{equation}
  p(\bm x_R)\in
  \left(\bigotimes_{i\in I}\mathbb{P}_k(x_i)\right)
  \otimes
  \left(\bigotimes_{j\in R\setminus I}\mathbb{P}_{k-1}(x_j)\right).
  \label{eq:test-polynomial-space}
\end{equation}
After fixing bases in these test spaces, the degrees of freedom are the
moments
\begin{equation}
  D_{F_{R,\bm\epsilon},I,p}(\omega)
  =
  \int_{F_{R,\bm\epsilon}}
  \operatorname{tr}_{F_{R,\bm\epsilon}}\omega
  \wedge p(\bm x_R)\,\dd x_{R\setminus I}.
  \label{eq:geometric-dofs}
\end{equation}
Here $\operatorname{tr}_{F_{R,\bm\epsilon}}$ denotes the pullback, or
tangential trace, of a differential form to the face $F_{R,\bm\epsilon}$.
Up to the orientation sign, the contribution of the $I$th component is the
more familiar scalar moment
\[
  \int_{F_{R,\bm\epsilon}}
  u_I(\bm x_R,\bm\epsilon)p(\bm x_R)\,\dd\bm x_R.
\]
When $R=I$, these are tangential moments on $l$-faces.  Larger $R$ gives
higher-dimensional face and cell-interior moments.  These functionals are
the standard tensor-product degrees of freedom, not a new moment system.
Indeed, set $s=k+1$.  On an interval, the $0$-form space $\mathbb{P}_s$ has
endpoint values and interior moments against $\mathbb{P}_{s-2}$, while the
$1$-form space $\mathbb{P}_{s-1}\dd x$ has moments against $\mathbb{P}_{s-1}$.  Taking
their tensor products gives an unisolvent system whose invariant form on an
$r$-face $F$ is
$\omega\mapsto\int_F\operatorname{tr}_F\omega\wedge q$ with
$q\in\mathcal{Q}_{s-1}^{-}\Lambda^{r-l}(F)$.  For the component $\dd x_I$, taking
$q=p\,\dd x_{R\setminus I}$ gives exactly
\eqref{eq:test-polynomial-space}--\eqref{eq:geometric-dofs}; see
\citet{ArnoldBoffiBonizzoni2015}.

On a conforming cubical mesh, degrees of freedom attached to a common
geometric face are identified with a consistent orientation.  This assembly
enforces the trace continuity required by $H\Lambda^l$.  In particular,
the 1D factors in form directions are discontinuous
piecewise-$\mathbb{P}_k$ functions, whereas transverse factors assemble as
continuous piecewise-$\mathbb{P}_{k+1}$ functions.  This observation is the
starting point for the compatible multilevel construction below.

\subsection{Compatible one-dimensional hierarchy}
\label{sec:hierarchy}

Let $\mathcal{T}_m$ be the uniform dyadic partition of $(0,1)$ into $2^m$ cells.
Define the nested spaces
\begin{align*}
  S_m^{k+1}
  &=
  \{v\in C^0([0,1]):v|_K\in\mathbb{P}_{k+1}(K),
    \ K\in\mathcal{T}_m\},\\
  Q_m^{k}
  &=
  \{q\in L^2(0,1):q|_K\in\mathbb{P}_k(K),
    \ K\in\mathcal{T}_m\}.
\end{align*}
Throughout the paper, $S_m^{k+1}$ and $Q_m^{k}$, and likewise their
detail spaces introduced below, denote scalar coefficient spaces.  In the 1D de Rham complex, $S_m^{k+1}$ is a $0$-form space and $Q_m^{k}\,\dd x$ is a $1$-form space.
They satisfy
\begin{equation*}
  \frac{\dd}{\dd x}S_m^{k+1}=Q_m^{k}.
\end{equation*}
For each fixed polynomial degree $k$, we shall suppress the $k$-dependence of
all interpolation and detail operators defined later.  

\subsubsection{Canonical interpolation and commutation}

On $K=[a,b]$, the continuous finite element triple
$\bigl(K,\mathbb{P}_{k+1}(K),\Sigma_K^0\bigr)$ uses the DoF set $\Sigma_K^0$ of endpoint values
$v\mapsto v(a)$, $v\mapsto v(b)$ and the $k$ interior moments
\[
  v\mapsto\int_K v(x)w(x)\,\dd x,\qquad w\in\mathbb{P}_{k-1}(K).
\]
The discontinuous finite element triple
$\bigl(K,\mathbb{P}_k(K),\Sigma_K^1\bigr)$ uses all $k+1$ interior moments as its DoF set $\Sigma_K^1$:
\[
  q\mapsto\int_K q(x)p(x)\,\dd x,\qquad p\in\mathbb{P}_k(K).
\]
For \(v\in H^1(0,1)\) and
\(q\in L^2(0,1)\), let
$\Pi_m^0v\in S_m^{k+1}$ and
$\Pi_m^1q\in Q_m^{k}$ denote the global canonical interpolants assembled from these local DoFs.  The latter is the $L^2$
projection. For each $K\in\mathcal T_m$ and $q\in\mathbb{P}_k(K)$, integration by parts gives
\[
  \int_K\bigl((\Pi_m^0v)'-v'\bigr)q\,\dd x
  =
  \bigl[(\Pi_m^0v-v)q\bigr]_{a}^{b}
  -\int_K(\Pi_m^0v-v)q'\,\dd x=0.
\]
Thus $(\Pi_m^0v)'$ and $v'$ share the
same moments against every $q\in\mathbb{P}_k(K)$ and
\begin{equation}
  (\Pi_m^0v)'=\Pi_m^1v'.
  \label{eq:1D-commutation}
\end{equation}
This commuting identity is the source of every
commuting relation later.

\subsubsection{Alpert and integrated-Alpert hierarchies}

Set $Z_0^{k}=Q_0^{k}$.  For $m\ge1$, define the $L^2$-orthogonal detail
space
\begin{equation*}
  Z_m^{k}
  =Q_m^{k}\cap Q_{m-1}^{k,\perp}.
\end{equation*}
For every $L\ge0$, the following direct sum is $L^2$-orthogonal:
\begin{equation*}
  Q_L^{k}=\bigoplus_{m=0}^{L} Z_m^{k}.
\end{equation*}
The construction of each hierarchical component is local. Let $K\in\mathcal{T}_{m-1}$ be split into two child intervals
$K_L$ and $K_R$, and define
\begin{equation*}
  \begin{aligned}
  Z_K^{k}=\bigl\{z\in L^2(K):\;&
  z|_{K_L}\in\mathbb{P}_k(K_L),\quad z|_{K_R}\in\mathbb{P}_k(K_R),\\
  &\int_K zq\,\dd x=0\quad\forall q\in\mathbb{P}_k(K)\bigr\}.
  \end{aligned}
\end{equation*}
Extending the local functions by zero outside their parent cells gives
\[
  Z_m^k=\bigoplus_{K\in\mathcal{T}_{m-1}}Z_K^k,\qquad m\ge1.
\]
Each $Z_K^k$ has dimension $k+1$, and hence
$\dim Z_m^k=2^{m-1}(k+1)$.  Choose an $L^2(K)$-orthonormal basis
$\{\psi_{m,K,r}^{k}\}_{r=0}^k$ of each local space.  These form an Alpert-type
polynomial multiwavelet basis \citep{Alpert1993}; related high-order sparse-grid
DG constructions use the same orthogonal polynomial hierarchy
\citep{WangTangGuoCheng2016,GuoCheng2016,GuoCheng2017}.

For compact notation, let $\Lambda_0^k=\{0,\ldots,k\}$ and, for $m\ge1$,
\[
  \Lambda_m^k
  =\{(K,r):K\in\mathcal{T}_{m-1},\ 0\le r\le k\}.
\]
We write $\psi_{m,\alpha}^{k}=\psi_{m,K,r}^{k}$ with
$\alpha=(K,r)\in\Lambda_m^k$.

For $m\ge1$, $\alpha=(K,r)\in\Lambda_m^k$, and $K=[a_K,b_K]$, define
\begin{equation*}
  \theta_{m,\alpha}^{k+1}(x)
  =
  \begin{cases}
    \displaystyle\int_{a_K}^{x}\psi_{m,\alpha}^{k}(s)\,\dd s,
      &x\in K,\\[1ex]
    0,&x\notin K.
  \end{cases}
\end{equation*}
Since $1\in\mathbb{P}_k(K)$, every element of $Z_K^k$ has zero mean on its parent.
Thus $\theta_{m,\alpha}^{k+1}$ vanishes at both endpoints of $K$. As a result, $\theta_{m,\alpha}^{k+1}$ is continuous and belongs to $S_m^{k+1}$.  Set
\[
  Y_m^{k+1}
  =\operatorname{span}\big\{\theta_{m,\alpha}^{k+1}:
             \alpha\in\Lambda_m^k\big\},\qquad m\ge1.
\]
At level zero, set $\widehat I=[0,1]$ and choose an orthonormal basis
$\{\psi_{0,r}^{k}\}_{r=0}^k$ of $\mathbb{P}_k(\widehat I)$; in the numerical
implementation, this is the normalized shifted Legendre basis.  Define
\[
  \theta_{0,\star}^{k+1}=1,\qquad
  \theta_{0,r}^{k+1}(x)
  =\int_0^x\psi_{0,r}^{k}(s)\,\dd s.
\]
These functions span $Y_0^{k+1}=S_0^{k+1}$.  Every nonconstant mode
satisfies
\begin{equation*}
  \frac{\dd}{\dd x}\theta_{m,\alpha}^{k+1}
  =\psi_{m,\alpha}^{k}.
\end{equation*}

\begin{lemma}[Continuous hierarchy]  \label{lem:continuous-hierarchy}
For every $L\ge0$,
\begin{equation*}
  S_L^{k+1}=\bigoplus_{m=0}^{L}Y_m^{k+1}.
\end{equation*}
\end{lemma}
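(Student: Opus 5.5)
The plan is to prove the decomposition by induction on $L$, transferring the already established orthogonal decomposition $Q_L^{k}=\bigoplus_{m=0}^{L}Z_m^{k}$ to the continuous spaces through differentiation. The key observation is that $\frac{\dd}{\dd x}:S_L^{k+1}\to Q_L^{k}$ is surjective with kernel the constants. Indeed, the paper already notes $\frac{\dd}{\dd x}S_L^{k+1}=Q_L^{k}$: any $q\in Q_L^k$ has antiderivative $\int_0^x q$, which is continuous and piecewise $\mathbb{P}_{k+1}$ on $\mathcal T_L$. Hence
\[
\dim S_L^{k+1}=\dim Q_L^{k}+1=2^L(k+1)+1.
\]

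First, I will check inclusion. Each $Y_m^{k+1}\subseteq S_m^{k+1}\subseteq S_L^{k+1}$ for $m\le L$. This was shown above for $m\ge1$, since $\theta_{m,\alpha}^{k+1}$ vanishes at both endpoints of its parent cell. For $m=0$ it holds because $Y_0^{k+1}=S_0^{k+1}$ and the spaces are nested. So the sum $\sum_{m=0}^L Y_m^{k+1}$ lies in $S_L^{k+1}$.

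Second, I will prove directness and compute the dimension together. Suppose
\[
\sum_{m=0}^L y_m=0,\qquad y_m\in Y_m^{k+1}.
\]
Differentiating gives $\sum_m y_m'=0$ with $y_m'\in Z_m^{k}$, because $\frac{\dd}{\dd x}\theta_{m,\alpha}^{k+1}=\psi_{m,\alpha}^k$ and the constant $\theta_{0,\star}^{k+1}$ differentiates to zero. Since the sum of the $Z_m^k$ is direct, each $y_m'=0$, so each $y_m$ is constant.

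For $m\ge1$, $y_m$ vanishes at the endpoints of every parent cell $K\in\mathcal T_{m-1}$, in particular at $x=0$, so $y_m=0$. Then $y_0=-\sum_{m\ge1}y_m=0$ as well.

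The same argument shows that the $\theta_{m,\alpha}^{k+1}$ are linearly independent within each level:
\begin{itemize}
\item for $m\ge1$, differentiation maps them to the orthonormal $\psi_{m,\alpha}^k$ injectively, since a constant vanishing at $0$ is zero;
\item at level $0$, $\{1\}\cup\{\theta_{0,r}^{k+1}\}$ are independent by the same reasoning.
\end{itemize}

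Third, I will count dimensions:
\[
\dim Y_0^{k+1}=k+2,\qquad \dim Y_m^{k+1}=2^{m-1}(k+1)\quad (m\ge1).
\]
Summing gives $(k+2)+(k+1)(2^L-1)=2^L(k+1)+1=\dim S_L^{k+1}$. A direct sum contained in $S_L^{k+1}$ with full dimension must equal $S_L^{k+1}$, which concludes the proof without needing an explicit induction.

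The only delicate point, and the step I expect to be the main obstacle, is the directness argument. There one must handle the kernel of $\frac{\dd}{\dd x}$, namely the constants. This is resolved by the boundary vanishing of the fine-level $\theta$'s and by the fact that constants appear only in $Y_0^{k+1}$.
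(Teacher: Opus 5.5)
Your proof is correct and uses the same ingredients as the paper's: differentiation onto the Alpert hierarchy, elimination of the constant kernel because the fine-level $\theta_{m,\alpha}^{k+1}$ vanish at parent nodes, and a dimension count. The difference is only organizational: the paper argues inductively via $S_{m-1}^{k+1}\cap Y_m^{k+1}=\{0\}$ and $\dim Y_m^{k+1}=\dim S_m^{k+1}-\dim S_{m-1}^{k+1}$, whereas you get directness of the whole sum at once from $Q_L^{k}=\bigoplus_{m}Z_m^{k}$, compare total dimensions, and make explicit the within-level linear independence that the paper's dimension count uses tacitly.
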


\begin{proof}
If $g\in S_{m-1}^{k+1}\cap Y_m^{k+1}$, then
$g'\in Q_{m-1}^{k}\cap Z_m^{k}=\{0\}$, and thus $g$ is constant.
Every function in $Y_m^{k+1}$ with $m\ge1$ vanishes at all parent nodes,
so $g=0$. Together with the dimension count
\[
  \dim Y_m^{k+1}
  =2^{m-1}(k+1)
  =\dim S_m^{k+1}-\dim S_{m-1}^{k+1}.
\]
the identity $S_{m-1}^{k+1}\cap Y_m^{k+1}=\{0\}$ proves the result by induction.
\end{proof}

By $L^2$-orthogonality, the canonical interpolation $\Pi_L^1$ can be written hierarchically as
\begin{equation*}
  \Pi_L^1q
  =\sum_{m=0}^{L}\sum_{\alpha\in\Lambda_m^k}
    d_{m,\alpha}\psi_{m,\alpha}^{k},
  \qquad
  d_{m,\alpha}
  =\inner[L^2(0,1)]{q}{\psi_{m,\alpha}^{k}}.
\end{equation*}
For the one-dimensional continuous interpolant onto $S_L^{k+1}$, we write
\begin{equation}
  \Pi_L^0v
  =v(0)\theta_{0,\star}^{k+1}
   +\sum_{m=0}^{L}\sum_{\alpha\in\Lambda_m^k}
    b_{m,\alpha}\theta_{m,\alpha}^{k+1},\qquad   b_{m,\alpha}
  =\inner[L^2(0,1)]{v'}{\psi_{m,\alpha}^{k}}.
  \label{eq:continuous-hierarchical-expansion}
\end{equation}
The formula for $b_{m,\alpha}$ is derived by differentiating
\eqref{eq:continuous-hierarchical-expansion}, using the commuting identity
\eqref{eq:1D-commutation}, and testing with the orthonormal
Alpert basis $\psi_{m,\alpha}^k$.
At lower
regularity, the finite element degrees of freedom still determine
$\Pi_L^0v$ and hence its unique coordinates in Lemma
\ref{lem:continuous-hierarchy}, provided the endpoint traces and moments
are bounded on the chosen domain.

\begin{figure}[!tbp]
  \centering
  \includegraphics[width=0.98\linewidth]{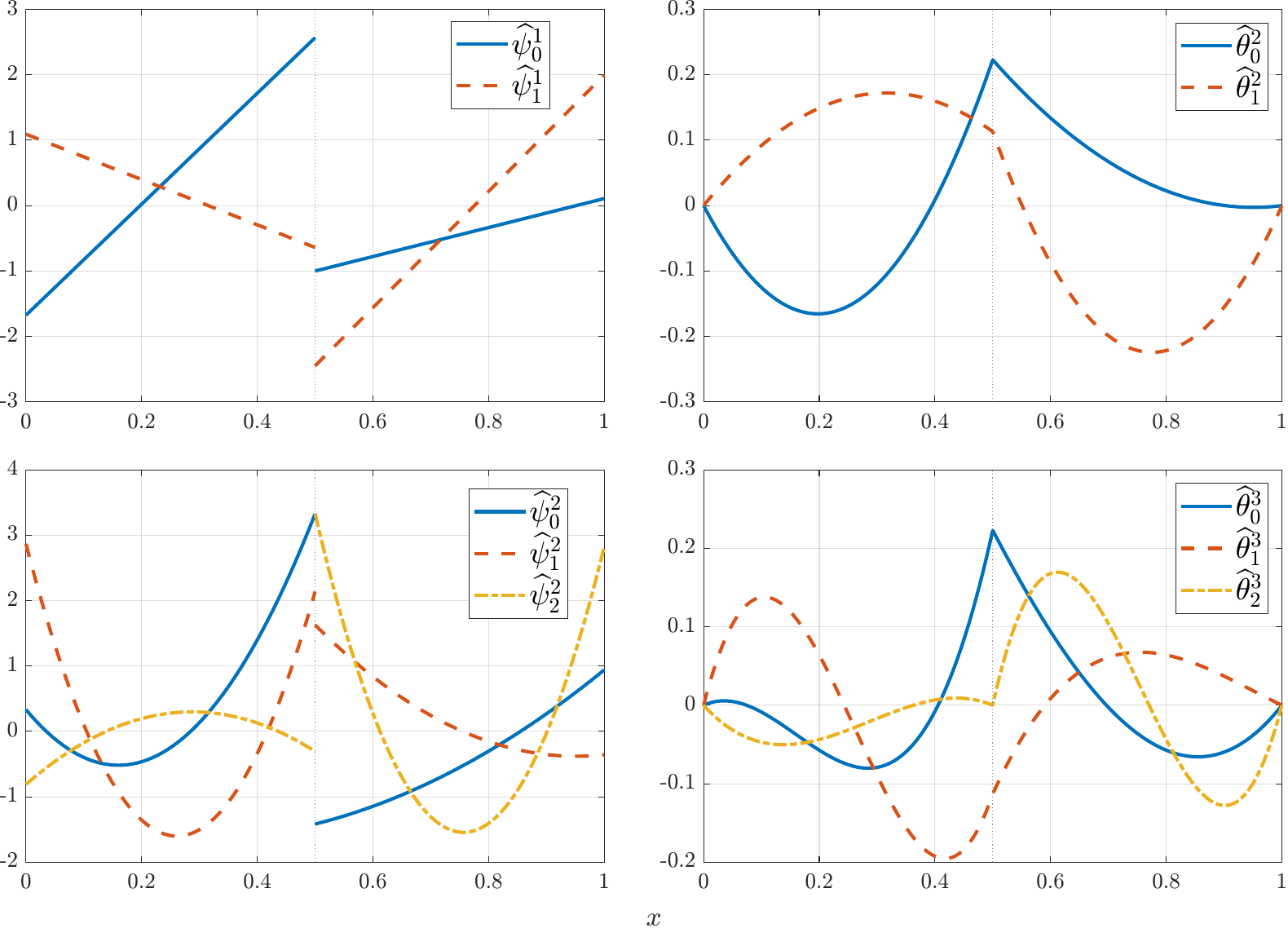}
  \caption{1D bases used in the computations on the reference
  interval $\widehat I=[0,1]$.  The top and bottom rows
  correspond to $k=1$ and $k=2$, respectively.  The left panels show the
  discontinuous functions $\widehat\psi_r^k$, and the right
  panels show their integrated continuous partners
  $\widehat\theta_r^{k+1}$.}
  \label{fig:one-dimensional-bases}
\end{figure}

\subsubsection{Computation of hierarchical basis}

The orthonormal basis $\{\psi_{m,K,r}^{k}\}_{r=0}^k$ of $Z_K^k$ is not unique.  For the numerical
implementation, a particular basis is constructed once on the reference
interval $\widehat I$ as follows.  Let
$\{\phi_q\}_{q=0}^k$, with $\phi_q=\psi_{0,q}^k$, be the normalized shifted Legendre basis of
$\mathbb P_k(\widehat I)$, and let
$\{\eta_s\}_{s=0}^{2k+1}$ be obtained from the normalized shifted Legendre
bases on $\widehat I_L=[0,1/2]$ and $\widehat I_R=[1/2,1]$ by extending
each basis function by zero to $\widehat I$.  Define the coupling matrix
\[
  M_{qs}=\int_{\widehat I}\phi_q(x)\eta_s(x)\,\dd x,
  \qquad 0\le q\le k,\quad 0\le s\le 2k+1.
\]
Let $B$ be a matrix whose
columns form an orthonormal basis of the null space of $M$.  The resulting reference
detail functions are
\[
  \widehat\psi_r^k
  =\sum_{s=0}^{2k+1}B_{sr}\eta_s,
  \qquad r=0,\ldots,k.
\]
They form an $L^2(\widehat I)$-orthonormal basis of the reference detail
space.  Define their integrated partners by
\[
  \widehat\theta_r^{k+1}(s)
  =\int_0^s \widehat\psi_r^k(t)\,\dd t.
\]
For $m\ge1$ and $\alpha=(K,r)\in\Lambda_m^k$, with
$K=[a_K,b_K]$, let $F_K(t)=a_K+|K|t$ map $\widehat I$ onto $K$.
The corresponding basis functions are obtained by the affine transformations
\begin{align*}
   \psi_{m,\alpha}^k(x)
  &=|K|^{-1/2}\widehat\psi_r^k
     \bigl(F_K^{-1}(x)\bigr),
  \qquad x\in K,\\
  \theta_{m,\alpha}^{k+1}(x)
  &=|K|^{1/2}\widehat\theta_r^{k+1}
     \bigl(F_K^{-1}(x)\bigr),
  \qquad x\in K,
\end{align*}
with both functions extended by zero outside $K$.  Thus the null space is
computed only once for each $k$.  The resulting reference bases are shown in
Figure~\ref{fig:one-dimensional-bases}.

\section{Tensor-product interpolation and sparse truncation}
\label{sec:sparse-interpolation}
In this section, we present the construction of finite element spaces of differential forms on sparse grids.

\subsection{Tensor-product detail spaces}

Set $\bm 0:=(0,\ldots,0)\in\N_0^d$.  For a multi-index
$\bm \nu=(\nu_1,\ldots,\nu_d)\in\N_0^d$, we write
$|\bm \nu|_1:=\sum_{s=1}^d \nu_s$, $|\bm \nu|_\infty:=\max_{1\le s\le d}\nu_s.$
For $\bm \mu,\bm \nu\in\N_0^d$, we say $\bm \mu\le\bm \nu$ if $
  \mu_s\le \nu_s$ for every $s\in[d]$.
A multi-index
$\bm\nu=(\nu_1,\ldots,\nu_d)\in\N_0^d$ determines an anisotropic level and the tensor grid
\[
  \mathcal{T}_{\bm\nu}
  :=\mathcal{T}_{\nu_1}\otimes\cdots\otimes\mathcal{T}_{\nu_d}.
\]
Here the interval (0,1) in the $x_i$-direction is uniformly bisected $\nu_i$
times and hence consists of $2^{\nu_i}$ cells.
For $s\in[d]$, $r\in\{0,1\}$, and $m\in\N_0$, let $\Pi_{s,m}^r$ denote
the 1D operator $\Pi_m^r$ acting only in the variable $x_s$.
For each component $u_I\dd x_I$, we define
\begin{equation}\label{eq:tensor-canonical-interpolation}
      \Pi_{\bm\nu}^{I}u_I
  =
  \left(\prod_{i\in I}\Pi_{i,\nu_i}^1\right)
  \left(\prod_{j\notin I}\Pi_{j,\nu_j}^0\right)u_I.
\end{equation}
Clearly, interpolation operators in each direction commute, and the order of the products is
irrelevant; an empty product is understood as the identity.
For a general differential form $\omega=\sum_{|I|=l}u_I\dd x_I$, define
\begin{equation}
  \Pi_{\bm\nu}^{l}\omega
  =
  \sum_{|I|=l}\Pi_{\bm\nu}^{I}u_I\,\dd x_I.
\end{equation}
This is the canonical interpolant associated with the geometric DoFs in
\eqref{eq:geometric-dofs}. For an ordered set $I$, the notation
$I\cup\{j\}$ always means the increasing ordering of the enlarged index
set.  We then write
\[
  \dd(u_I\dd x_I)
  =
  \sum_{j\notin I}\epsilon(j,I)\,
  \partial_{x_j}u_I\,\dd x_{I\cup\{j\}},
  \qquad
  \epsilon(j,I)=(-1)^{\#\{i\in I:i<j\}}.
\]

For completeness,
we verify the commuting property $\dd\Pi_{\bm\nu}^{l}
  =\Pi_{\bm\nu}^{l+1}\dd$. Let $j\notin I$ be an index for a transverse direction.
In \eqref{eq:tensor-canonical-interpolation}, the $x_j$-factor is
$\Pi_{j,\nu_j}^0$, whereas the factors in all other coordinates act on
variables different from $x_j$ and hence commute with
$\partial_{x_j}$.  Therefore, using the 1D relation
\eqref{eq:1D-commutation},
\[
  \begin{aligned}
  \partial_{x_j}\Pi_{\bm\nu}^{I}u_I
  &=
  \left(\prod_{i\in I}\Pi_{i,\nu_i}^1\right)
  \left(\partial_{x_j}\Pi_{j,\nu_j}^0\right)
  \left(\prod_{s\notin I\cup\{j\}}\Pi_{s,\nu_s}^0\right)u_I\\
  &=
  \left(\prod_{i\in I\cup\{j\}}\Pi_{i,\nu_i}^1\right)
  \left(\prod_{s\notin I\cup\{j\}}\Pi_{s,\nu_s}^0\right)
  \partial_{x_j}u_I\\
  &=\Pi_{\bm\nu}^{I\cup\{j\}}\partial_{x_j}u_I.
  \end{aligned}
\]
Applying the exterior derivative to the interpolated differential-form
component now gives
\[
  \begin{aligned}
  \dd\left[
    \left(\Pi_{\bm\nu}^{I}u_I\right)\dd x_I
  \right]
  =
  \sum_{j\notin I}\epsilon(j,I)
  \Pi_{\bm\nu}^{I\cup\{j\}}
  \partial_{x_j}u_I\,\dd x_{I\cup\{j\}}
  =
  \Pi_{\bm\nu}^{l+1}\dd(u_I\dd x_I).
  \end{aligned}
\]
The last equality is the componentwise definition of
$\Pi_{\bm\nu}^{l+1}$ applied to $\dd(u_I\dd x_I)$.

Finally, summing the preceding identity over all $|I|=l$ yields
\[
  \begin{aligned}
  \dd\Pi_{\bm\nu}^{l}\omega
  &=
  \sum_{|I|=l}
  \dd\left[
    \left(\Pi_{\bm\nu}^{I}u_I\right)\dd x_I
  \right]\\
  &=
  \sum_{|I|=l}
  \Pi_{\bm\nu}^{l+1}\dd(u_I\dd x_I)
  =
  \Pi_{\bm\nu}^{l+1}\dd\omega.
  \end{aligned}
\]
Since this identity holds for every such sufficiently regular $l$-form
$\omega$, we obtain, for $0\le l\le d-1$,
\begin{equation*}
  \dd\Pi_{\bm\nu}^{l}
  =\Pi_{\bm\nu}^{l+1}\dd.
\end{equation*}

Set $\Pi_{-1}^0=\Pi_{-1}^1=0$ and define the one-dimensional differences
\[
  \Delta_m^r=\Pi_m^r-\Pi_{m-1}^r,
  \qquad r\in\{0,1\},\quad m\in\N_0.
\]
Their ranges are $Y_m^{k+1}$ and $Z_m^{k}$, respectively.  For
$s\in[d]$, let $\Delta_{s,m}^r$ denote $\Delta_m^r$ acting only in the variable $x_s$.  For
$I\subseteq[d]$ and $\bm m=(m_1,\ldots,m_d)\in\N_0^d$, define the scalar
coefficient detail block
\begin{subequations}
  \begin{align}
  W_{\bm m}^{I,k}
  &=
  \left(\bigotimes_{i\in I}Z_{m_i}^{k}(x_i)\right)
  \otimes
  \left(\bigotimes_{j\notin I}Y_{m_j}^{k+1}(x_j)\right),\label{eq:WmI}
  \\
  H_{\bm m}^{I}
  &=
  \left(\prod_{i\in I}\Delta_{i,m_i}^1\right)
  \left(\prod_{j\notin I}\Delta_{j,m_j}^0\right).
\end{align}
\end{subequations}
The tensor telescoping identity gives
\begin{equation*}
  \Pi_{\bm\nu}^{I}
  =
  \sum_{\bm 0\le\bm m\le\bm\nu}H_{\bm m}^{I},
  \qquad
  \bigotimes_{i\in I}Q_{\nu_i}^{k}(x_i)
  \otimes\bigotimes_{j\notin I}S_{\nu_j}^{k+1}(x_j)
  =
  \bigoplus_{\bm 0\le\bm m\le\bm\nu}W_{\bm m}^{I,k}.
\end{equation*}

To write a basis, extend the continuous index set by
\[
  \widetilde\Lambda_0^k=\{\star\}\cup\Lambda_0^k,\qquad
  \widetilde\Lambda_m^k=\Lambda_m^k\quad(m\ge1).
\]
For $\bm\alpha\in\Lambda_{\bm m}^{I,k}
  :=
  \left(\prod_{i\in I}\Lambda_{m_i}^k\right)
  \times
  \left(\prod_{j\notin I}\widetilde\Lambda_{m_j}^k\right)$, we
define
\begin{equation*}
  \Psi_{\bm m,\bm\alpha}^{I,k}(\bm x)
  =
  \prod_{i\in I}\psi_{m_i,\alpha_i}^{k}(x_i)
  \prod_{j\notin I}\theta_{m_j,\alpha_j}^{k+1}(x_j).
\end{equation*}
These scalar functions form a basis of $W_{\bm m}^{I,k}$, and
$\{\Psi_{\bm m,\bm\alpha}^{I,k}\dd x_I\}_{\bm\alpha}$ is the associated
basis of the form block $W_{\bm m}^{I,k}\dd x_I$.

\subsection{Sparse spaces and commuting interpolation}

For $n\in\N_0$, let
\begin{equation*}
  \mathcal{M}_n
  =
  \{\bm m\in\N_0^d:|\bm m|_1\le n+d-1\}.
\end{equation*}
For a fixed form index $I$, define the scalar coefficient truncation
\[
  \mathcal I_n^{I}u_I
  :=
  \sum_{\bm m\in\mathcal{M}_n}H_{\bm m}^{I}u_I.
\]
The sparse-grid space of $l$-forms is
\begin{equation}
  V_{n,k}^l
  =
  \bigoplus_{I\subseteq[d],\, |I|=l}
  \bigoplus_{\bm m\in\mathcal{M}_n}
  W_{\bm m}^{I,k}\,\dd x_I.
  \label{eq:sparse-form-space}
\end{equation}
The corresponding sparse-grid interpolation onto $V_{n,k}^l$ is
\begin{equation*}
  \mathcal I_n^l\omega
  =
  \sum_{I\subseteq[d],\, |I|=l}
  \left(\mathcal I_n^{I}u_I\right)\dd x_I.
\end{equation*}
The same index set $\mathcal{M}_n$ is used in every form degree.  This point is
essential: differentiation changes a continuous factor at level $m_j$ into
the paired discontinuous factor at the same level and therefore never
changes $\bm m$.  Hence the blockwise mapping immediately gives
$\dd V_{n,k}^l\subseteq V_{n,k}^{l+1}$.  The 1D commuting
relation \eqref{eq:1D-commutation} gives, for the level differences,
\[
  \partial_x\Delta_m^0=\Delta_m^1\partial_x,
  \qquad m\in\N_0.
\]
Together with the commutativity of operators acting in different
coordinates, this implies, for $j\notin I$,
\[
  \partial_{x_j}H_{\bm m}^{I}u_I
  =H_{\bm m}^{I\cup\{j\}}\partial_{x_j}u_I.
\]
Using the component formula for the exterior derivative and summing over
the unchanged index set $\mathcal M_n$, we conclude
\[
  \dd\mathcal I_n^l\omega
  =\sum_{\substack{I\subseteq[d]\\|I|=l}}
    \sum_{\bm m\in\mathcal M_n}
    \sum_{j\notin I}\epsilon(j,I)
    H_{\bm m}^{I\cup\{j\}}(\partial_{x_j}u_I)
    \,\dd x_{I\cup\{j\}}=\mathcal I_n^{l+1}\dd\omega.
\]
Thus the sparse spaces form a discrete de Rham subcomplex and the sparse
interpolants form a commuting family.

For comparison, let
\[
  \mathcal F_n
  :=
  \{\bm m\in\N_0^d:|\bm m|_\infty\le n\},
\]
and denote the corresponding full-grid finite element space of $l$-forms by
\begin{align*}
    V_{n,k}^{l,\mathrm{FG}}
  &:=
  \bigoplus_{I\subseteq[d],\,|I|=l}
  \left(
    \bigotimes_{i\in I}Q_n^k(x_i)
  \right)
  \otimes
  \left(
    \bigotimes_{j\notin I}S_n^{k+1}(x_j)
  \right)
  \dd x_I\\
  &=
  \bigoplus_{I\subseteq[d],\,|I|=l}
  \bigoplus_{\bm m\in\mathcal F_n}
  W_{\bm m}^{I,k}\,\dd x_I.
\end{align*}
The first expression is the global conforming finite element space obtained
by assembling the local shape function space \(\mathcal{Q}_{k+1}^{-}\Lambda^l\) \eqref{eq:trimmed-tensor-space} on the level-$n$ full cubical grid.  The
second expression is the same space written in the hierarchical basis.
Thus the sparse-grid space \(V_{n,k}^l\) is a subspace of the full-grid space 
\(V_{n+d-1,k}^{l,\mathrm{FG}}\). Their sizes are 
\[
  \dim V_{n,k}^l
  =O(2^n n^{d-1}),\qquad
  \dim V_{n,k}^{l,\mathrm{FG}}
  =O(2^{dn}),
\]
which is the basic sparse-grid complexity reduction
\citep{BungartzGriebel2004,GradinaruHiptmair2003}. In vector proxy, $V_{n,k}^{l,\mathrm{FG}}$ is nothing but the N\'ed\'elec $(l=1, d=3)$ or Raviart--Thomas $(l=2, d=3)$ finite element on the cubical mesh $\mathcal{T}_{(n,n,\ldots,n)}$.

\section{Sparse-grid approximation}
\label{sec:approximation}

The approximation error estimates in this section are stated for functions with mixed Sobolev
regularity.  For a component $u_I$, $|I|=l$, define
\[
  r_s=
  \begin{cases}
    k+1,&s\in I,\\
    k+2,&s\notin I,
  \end{cases}
\]
put $\bm r=(r_1,\ldots,r_d)$, and define the mixed Sobolev space
\[
  \mathcal X_I^k
  =
  \left\{
  v\in L^2(\Omega):
  \partial_{x_1}^{q_1}\cdots\partial_{x_d}^{q_d}v\in L^2(\Omega)
  \ \text{for every }\bm q\in\N_0^d
  \text{ with }\bm 0\le\bm q\le\bm r
  \right\}.
\]
This space is equipped
with the mixed norm
\begin{equation*}
  \norm[\mathcal X_I^k]{u_I}^2
  =
  \sum_{\bm q\in\N_0^d,\,\bm 0\le\bm q\le\bm r}
  \norm[L^2(\Omega)]{
    \partial_{x_1}^{q_1}\cdots\partial_{x_d}^{q_d}u_I}^2.
\end{equation*}
This norm is slightly stronger than necessary, but it cleanly controls all
mixed derivatives arising in the successive one-dimensional detail
estimates. Similar mixed Sobolev regularity is often called Korobov regularity and is used in neural-network approximation \cite{MontanelliDu2019,MaoZhou2022,LiZhang2025,LiZhang2026}.

For differential $l$-forms, set
\[
  \mathcal X_l^k
  =
  \left\{
  \omega=\sum_{|I|=l}u_I\,\dd x_I:
  u_I\in\mathcal X_I^k
  \right\},
  \qquad
  \norm[\mathcal X_l^k]{\omega}^2
  =\sum_{|I|=l}\norm[\mathcal X_I^k]{u_I}^2.
\]

Throughout this section, $C$ is independent of the grid level index $n$; its
dependence on $k$, $d$, or $l$ is indicated by subscripts.

\subsection{One-dimensional and tensor detail estimates}

For $m\ge1$, the difference
$\Delta_m^1=\Pi_m^1-\Pi_{m-1}^1$ is the $L^2$-orthogonal projection onto
$Z_m^{k}$.  On each cell $K\in\mathcal{T}_{m-1}$, since every detail element in $Z_K^k$ is orthogonal to $\mathbb{P}_k(K)$, we have, for any
$p_K\in\mathbb{P}_k(K)$,
\[
  \norm[L^2(K)]{\Delta_m^1q}
  =
  \norm[L^2(K)]{\Delta_m^1(q-p_K)}
  \le \norm[L^2(K)]{q-p_K}.
\]
The Bramble--Hilbert lemma, with $h_K=2^{-(m-1)}$, gives
\[
  \inf_{p_K\in\mathbb{P}_k(K)}\norm[L^2(K)]{q-p_K}
  \le C_kh_K^{k+1}|q|_{H^{k+1}(K)}.
\]
After squaring and summing over the parent cells, the fixed factor
$2^{k+1}$ is absorbed into $C_k$, and hence
\begin{equation}
  \norm[L^2(0,1)]{\Delta_m^1q}
  \le C_k2^{-(k+1)m}\norm[H^{k+1}(0,1)]{q}.
  \label{eq:DG-one-dimensional-decay}
\end{equation}
For the continuous detail $w=\Delta_m^0v$, the restriction $w|_K$ vanishes
at both endpoints of each parent cell.  The Poincar\'e inequality
and the 1D commuting relation give
\[
  \norm[L^2(K)]{w}
  \le C h_K\norm[L^2(K)]{w'}
  =
  C h_K\norm[L^2(K)]{\Delta_m^1v'}.
\]
Applying \eqref{eq:DG-one-dimensional-decay} to $v'$ and then summing over all cells in $\mathcal{T}_{m-1}$ therefore yields
\begin{equation}
  \norm[L^2(0,1)]{\Delta_m^0v}
  \le C_k2^{-(k+2)m}\norm[H^{k+2}(0,1)]{v}.
  \label{eq:continuous-1D-decay}
\end{equation}
At $m=0$, the exponential factors in
\eqref{eq:DG-one-dimensional-decay} and
\eqref{eq:continuous-1D-decay} equal one, and the
corresponding estimates follow from the boundedness of the fixed
finite-dimensional interpolation operators.

\begin{lemma}[Multidimensional detail estimate]
\label{thm:tensor-detail-estimate}
For $u_I\in\mathcal X_I^k$ and $\bm m\in\N_0^d$,
\begin{equation*}
  \norm[L^2(\Omega)]{H_{\bm m}^{I}u_I}
  \le C_{k,d}
  2^{-(k+1)\sum_{i\in I}m_i}
  2^{-(k+2)\sum_{j\notin I}m_j}
  \norm[\mathcal X_I^k]{u_I}.
\end{equation*}
\end{lemma}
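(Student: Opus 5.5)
The plan is to exploit the tensor-product structure of $H_{\bm m}^{I}=\prod_{s}\Delta^{(s)}_{s,m_s}$, where $\Delta^{(s)}=\Delta^1$ for $s\in I$ and $\Delta^{(s)}=\Delta^0$ for $s\notin I$. We apply the one-dimensional decay estimates \eqref{eq:DG-one-dimensional-decay} and \eqref{eq:continuous-1D-decay} one coordinate at a time. For each direction we use Fubini: regard the function as a function of $x_s$ with all other variables frozen, apply the 1D bound, and integrate the square over the remaining variables. Since each $\Delta_{s,m_s}$ acts only in $x_s$, the operators commute. We therefore peel off the factors in the order $s=1,\dots,d$.

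The key step is an induction statement that keeps track of derivatives in the not-yet-processed directions. For a subset $J\subseteq[d]$ and a multi-index $\bm q$ supported on $J^c$ with $\bm q\le\bm r$, we claim
\[
\Bigl\|\partial^{\bm q}\prod_{s\in J}\Delta^{(s)}_{s,m_s}u_I\Bigr\|_{L^2(\Omega)}
\le C\prod_{s\in J}2^{-r_s m_s}\Bigl(\sum_{\bm p\le\bm r,\ p_t=q_t\ (t\notin J)}\|\partial^{\bm p}u_I\|^2_{L^2(\Omega)}\Bigr)^{1/2}.
\]
To add a new direction $s\notin J$, we use two facts. First, $\partial_{x_t}$ for $t\ne s$ commutes with $\Delta_{s,m_s}$, because $\Delta_{s,m_s}$ is built from point values at nodes in $x_s$ and from moments in $x_s$ with weights depending only on $x_s$. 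Second, for a.e.\ frozen value of the other variables, the 1D estimate gives
\[
\|\Delta^{(s)}_{m_s}g(\cdot)\|_{L^2(0,1)}\le C_k2^{-r_s m_s}\|g(\cdot)\|_{H^{r_s}(0,1)},
\]
where $g=\partial^{\bm q'}\prod_{J}\Delta u_I$ with $q'_s=0$. Expanding the $H^{r_s}$ norm as a sum over $p_s\le r_s$ of $\|\partial_{x_s}^{p_s}\cdot\|$, and using the commutation of $\partial_{x_s}$ with the $\Delta$'s in the other directions, reduces this case to the induction hypothesis with the extra derivative $\partial_{x_s}^{p_s}$ placed on an unprocessed direction. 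Reordering the induction to process $s$ first avoids any circularity. Once all $d$ directions are processed, the sum on the right-hand side is bounded by $\|u_I\|_{\mathcal X_I^k}^2$, because every $\bm p\le\bm r$ that appears is admissible in the mixed norm. The constant is a product of $d$ one-dimensional constants, hence $C_{k,d}$. Levels $m_s=0$ contribute factor one by the remark after \eqref{eq:continuous-1D-decay}.

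The main obstacle is to justify the slicewise application of $\Delta^0_{s,m}$, which needs point values in $x_s$. We need that for $u_I\in\mathcal X_I^k$, and after the operators in the other directions have been applied, almost every slice lies in $H^{r_s}(0,1)$ with measurable norms. We also need $\Delta^0_{s,m}$ to commute with $\partial_{x_t}$ and with the $x_t$-moments. We would handle this by density: prove the estimate first for smooth $u_I$, where every slice is smooth and Fubini is immediate. Then extend it using the boundedness of $H_{\bm m}^I$ from $\mathcal X_I^k$ into $L^2$. That boundedness itself follows from the same estimate, since mixed regularity with $r_s\ge1$ gives continuous traces in each variable, so the $m=0$ bounds hold.
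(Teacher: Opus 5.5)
Your proposal is correct and follows essentially the same route as the paper. Both apply the one-dimensional detail bounds \eqref{eq:DG-one-dimensional-decay} and \eqref{eq:continuous-1D-decay} one direction at a time via Fubini, commuting each $\partial_{x_s}^{p_s}$ past the detail operators in the other coordinates, so that the accumulated mixed derivatives are exactly those in $\|u_I\|_{\mathcal X_I^k}$. Your explicit induction over processed directions and your density argument only make rigorous what the paper compresses into ``applying the same argument in $x_2,\ldots,x_d$'' and its tacit slicewise well-definedness.
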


\begin{proof}
Write $B_{s,m_s}=\Delta_{s,m_s}^1$ for $s\in I$ and
$B_{s,m_s}=\Delta_{s,m_s}^0$ for $s\notin I$, and let $r_s=k+1$ or $k+2$,
respectively.  Regard all variables except $x_1$ as parameters.  The
1D estimate  and Fubini's theorem
give
\[
\begin{aligned}
 \norm[L^2(\Omega)]{B_{1,m_1}\cdots B_{d,m_d}u_I}^2
 &\le C_k^2 2^{-2r_1m_1}
 \sum_{q_1=0}^{r_1}
 \norm[L^2(\Omega)]{
   \partial_{x_1}^{q_1}B_{2,m_2}\cdots B_{d,m_d}u_I}^2\\
 &=C_k^2 2^{-2r_1m_1}
 \sum_{q_1=0}^{r_1}
 \norm[L^2(\Omega)]{
   B_{2,m_2}\cdots B_{d,m_d}\partial_{x_1}^{q_1}u_I}^2.
\end{aligned}
\]
The equality uses the commuting property of operators acting in different coordinates.
Applying the same argument in $x_2,\ldots,x_d$ yields
\[
 \norm[L^2(\Omega)]{H_{\bm m}^{I}u_I}^2
 \le C_{k,d}^2 2^{-2\sum_{s=1}^d r_sm_s}
 \sum_{\substack{\bm q\in\N_0^d\\\bm 0\le\bm q\le\bm r}}
 \norm[L^2(\Omega)]{
   \partial_{x_1}^{q_1}\cdots\partial_{x_d}^{q_d}u_I}^2.
\]
The last sum is exactly $\norm[\mathcal X_I^k]{u_I}^2$, which completes the proof.
\end{proof}

\subsection{Summation of the sparse tail}

The full hierarchical expansion converges in $L^2$ for
$u_I\in\mathcal X_I^k$, and the sparse-grid error is exactly
\[
  u_I-\mathcal I_n^{I}u_I
  =
  \sum_{|\bm m|_1>n+d-1}H_{\bm m}^{I}u_I.
\]
Set $N=n+d-1$. For
fixed $I\subset[d]$, $|I|=l$, the scalar tail that remains after applying
Lemma~\ref{thm:tensor-detail-estimate} is
\[
  S_I(N)
  :=
  \sum_{|\bm m|_1>N}
  2^{-(k+1)\sum_{i\in I}m_i}
  2^{-(k+2)\sum_{j\notin I}m_j}.
\]
We estimate $S_I(N)$ according to the form degree.
We use the exponentially weighted tail estimate
\begin{equation}
  \sum_{s>N}2^{-\sigma s}(s+1)^q
  \le C_{\sigma,q}2^{-\sigma N}(N+1)^q,
  \qquad \sigma>0,\quad q\in\N_0.
  \label{eq:exponentially-weighted-tail}
\end{equation}
Indeed, writing $s=N+i$ and using
$N+i+1\le(N+1)(i+1)$ gives
\[
  \sum_{s>N}2^{-\sigma s}(s+1)^q
  \le
  2^{-\sigma N}(N+1)^q
  \sum_{i=1}^{\infty}2^{-\sigma i}(i+1)^q,
\]
and the last series is finite for $\sigma>0$.

If $l=0$, then $I=\varnothing$, so every direction has exponent $k+2$.
Decomposing the summation domain into the shells
$\{\bm m\in\N_0^d:|\bm m|_1=s\}$ gives
\begin{align*}
  S_\varnothing(N)
  &=\sum_{s>N}
    \sum_{\substack{\bm m\in\N_0^d\\|\bm m|_1=s}}
    2^{-(k+2)|\bm m|_1} \\
  &=\sum_{s>N}
    \#\{\bm m\in\N_0^d:|\bm m|_1=s\}\,2^{-(k+2)s} \\
  &=\sum_{s>N}\binom{s+d-1}{d-1}2^{-(k+2)s} \\
  &\le \sum_{s>N}2^{-(k+2)s}(s+1)^{d-1} \\
  &\le C_{k,d}2^{-(k+2)N}(N+1)^{d-1},
\end{align*}
where the last step follows from
\eqref{eq:exponentially-weighted-tail} with $\sigma=k+2$ and $q=d-1$.

Suppose next that $1\le l<d$.  For each $\bm m\in\N_0^d$, set
\[
  a(\bm m)=\sum_{i\in I}m_i,
  \qquad
  b(\bm m)=\sum_{j\notin I}m_j,
  \qquad
  |\bm m|_1=a(\bm m)+b(\bm m).
\]
For fixed $s\in\N_0$ and $0\le t\le s$,
\begin{align*}
\#\{\bm m\in\N_0^d:a(\bm m)=s-t,\ b(\bm m)=t\}&=
  \binom{s-t+l-1}{l-1}
  \binom{t+d-l-1}{d-l-1},\\
  \binom{s-t+l-1}{l-1}\le (s+1)^{l-1},&
  \qquad
  \sum_{t=0}^{\infty}2^{-t}
  \binom{t+d-l-1}{d-l-1}= 2^{d-l}.
\end{align*}

Summing over the shell index $s=|\bm m|_1$ and $b(\bm m)=t$, we obtain
\begin{align*}
  S_I(N)
  &=\sum_{s>N}
    \sum_{\substack{\bm m\in\N_0^d\\|\bm m|_1=s}}
    2^{-(k+1)s}2^{-b(\bm m)} \\
  &=\sum_{s>N}2^{-(k+1)s}
    \sum_{t=0}^{s}2^{-t}
    \#\{\bm m\in\N_0^d:a(\bm m)=s-t,\ b(\bm m)=t\}\\
  &=\sum_{s>N}2^{-(k+1)s}
    \sum_{t=0}^{s}2^{-t}
    \binom{s-t+l-1}{l-1}
    \binom{t+d-l-1}{d-l-1} \\
  &\le 2^{d-l}\sum_{s>N}2^{-(k+1)s}(s+1)^{l-1} \\
  &\le C_{k,d,l}2^{-(k+1)N}(N+1)^{l-1}.
\end{align*}
The last step is \eqref{eq:exponentially-weighted-tail} with
$\sigma=k+1$ and $q=l-1$.

Finally, if $l=d$, there are no transverse directions.
Hence
\begin{align*}
  S_{[d]}(N)=\sum_{s>N}\binom{s+d-1}{d-1}2^{-(k+1)s}\le C_{k,d}2^{-(k+1)N}(N+1)^{d-1},
\end{align*}

Since $2^{-(k+1)N}=2^{-(k+1)n}2^{-(k+1)(d-1)}$ and $(N+1)^{l-1}\leq d^{l-1}(n+1)^{l-1}$, replacing $N$ by $n$ in these bounds changes
only constants depending on $k$, $d$, and $l$.

\begin{theorem}[Sparse-grid $L^2$ approximation]
\label{thm:L2-approximation}
Let $\omega=\sum_{|I|=l}u_I\dd x_I$ with each coefficient
$u_I\in\mathcal X_I^k$ and
\[  E_{l,k,d}(n)
  =
  \begin{cases}
    2^{-(k+2)n}(n+1)^{d-1},&l=0,\\
    2^{-(k+1)n}(n+1)^{l-1},&1\le l\le d.
  \end{cases}\]
Then there is a constant $C_{k,d,l}$ independent of
$n$ such that
\begin{align*}
  \norm[L^2\Lambda^l(\Omega)]{
    \omega-\mathcal I_n^l\omega}
  &\le C_{k,d,l}E_{l,k,d}(n)
  \norm[\mathcal X_l^k]{\omega}.
\end{align*}
\end{theorem}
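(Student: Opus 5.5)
The bound is assembled componentwise from the tail sums $S_I(N)$ already estimated above. Since $\norm[L^2\Lambda^l(\Omega)]{\omega-\mathcal I_n^l\omega}^2=\sum_{|I|=l}\norm[L^2(\Omega)]{u_I-\mathcal I_n^Iu_I}^2$ and $\norm[\mathcal X_l^k]{\omega}^2=\sum_{|I|=l}\norm[\mathcal X_I^k]{u_I}^2$, it suffices to prove, for each fixed $I$ with $|I|=l$,
\[
  \norm[L^2(\Omega)]{u_I-\mathcal I_n^Iu_I}\le C_{k,d,l}E_{l,k,d}(n)\norm[\mathcal X_I^k]{u_I}.
\]
Squaring and summing over the $\binom{d}{l}$ components then only enlarges the constant.

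\textbf{Step 1: tail representation.} For fixed $I$, write the error as the hierarchical tail
\[
  u_I-\mathcal I_n^Iu_I=\sum_{|\bm m|_1>N}H_{\bm m}^Iu_I,\qquad N=n+d-1.
\]
This requires that the full expansion $\sum_{\bm m}H_{\bm m}^Iu_I$ converge to $u_I$ in $L^2$, which is the one point needing justification. I would take the rectangular partial sums $\Pi_{\bm\nu}^Iu_I=\sum_{\bm 0\le\bm m\le\bm\nu}H_{\bm m}^Iu_I$. I would then show $\Pi_{\bm\nu}^Iu_I\to u_I$ as $\min_s\nu_s\to\infty$, using the full-grid telescoping $u-\Pi_{\bm\nu}^I u=\sum_s(\prod_{t<s}\Pi_{t,\nu_t})(I-\Pi_{s,\nu_s})u$. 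Each factor $I-\Pi_{s,\nu_s}$ is controlled by the 1D estimates, and the preceding interpolants are bounded on mixed-regular functions, as in the proof of Lemma~\ref{thm:tensor-detail-estimate}. Alternatively, absolute summability from Lemma~\ref{thm:tensor-detail-estimate} makes the series converge to some limit, and the rectangular partial sums identify that limit as $u_I$.

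\textbf{Step 2: triangle inequality.} Apply the triangle inequality (not orthogonality, which fails for the continuous factors) together with Lemma~\ref{thm:tensor-detail-estimate}:
\[
  \norm[L^2(\Omega)]{u_I-\mathcal I_n^Iu_I}\le C_{k,d}\,S_I(N)\,\norm[\mathcal X_I^k]{u_I}.
\]

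\textbf{Step 3: insert the shell bounds.} Use the three estimates for $S_I(N)$ proved above:
\[
  S_\varnothing(N)\lesssim 2^{-(k+2)N}(N+1)^{d-1},\qquad S_I(N)\lesssim 2^{-(k+1)N}(N+1)^{l-1}\ \ (1\le l<d),\qquad S_{[d]}(N)\lesssim 2^{-(k+1)N}(N+1)^{d-1}.
\]
The case $l=d$ agrees with the formula $(n+1)^{l-1}$ since $l-1=d-1$.

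\textbf{Step 4: convert $N$ to $n$.} Use $2^{-\sigma N}\le 2^{-\sigma n}$ and $(N+1)^q\le d^q(n+1)^q$, which yields exactly $E_{l,k,d}(n)$ with constants depending only on $k,d,l$.

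\textbf{Main obstacle.} The only genuinely delicate step is the $L^2$ convergence of the full hierarchical expansion in Step 1, i.e.\ that the canonical interpolants converge for $\mathcal X_I^k$ functions. Everything else is bookkeeping already carried out in the text.
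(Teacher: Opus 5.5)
Your proposal is correct and follows the paper's proof essentially step for step. Both arguments apply the componentwise triangle inequality with Lemma~\ref{thm:tensor-detail-estimate}, insert the casewise bounds on $S_I(N)$, convert from $N=n+d-1$ to $n$, and sum over the $\binom dl$ orthogonal components. Your Step~1 goes further by justifying the $L^2$ convergence of the hierarchical expansion, which the paper simply asserts. Note, though, that your ``alternative'' via absolute summability does not stand on its own: it still needs $\Pi_{\bm\nu}^Iu_I\to u_I$ from your first route to identify the limit.
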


\begin{proof}
By the triangle inequality and Lemma \ref{thm:tensor-detail-estimate}, each
component satisfies
\[
  \norm[L^2(\Omega)]{u_I-\mathcal I_n^{I}u_I}
  \le C_{k,d}\norm[\mathcal X_I^k]{u_I}
  \sum_{|\bm m|_1>N}
  2^{-(k+1)\sum_{i\in I}m_i}
  2^{-(k+2)\sum_{j\notin I}m_j}.
\]
The preceding casewise estimates bound this sum by
$C_{k,d,l}E_{l,k,d}(n)$.  Since
the coordinate forms $\{\dd x_I\}$ are orthonormal,
\[
  \norm[L^2\Lambda^l(\Omega)]{\omega-\mathcal I_n^l\omega}^2
  =
  \sum_{|I|=l}
  \norm[L^2(\Omega)]{u_I-\mathcal I_n^Iu_I}^2.
\]
Substitution of the component estimates and summation over the fixed number
$\binom dl$ of components proves the theorem.
\end{proof}

For \(0\le l\le d-1\), the sparse-grid approximation error bound in the graph follows from the commuting property.  In fact,
\[
  \dd(\omega-\mathcal I_n^l\omega)
  =
  \dd\omega-\mathcal I_n^{l+1}\dd\omega.
\]
Applying Theorem \ref{thm:L2-approximation} once to $\omega$ and once to
$\dd\omega$ proves the following
corollary.

\begin{corollary}\label{cor:graph-approximation}
Let \(0\le l\le d-1\).  If $\omega\in\mathcal X_l^k$ and
$\dd\omega\in\mathcal X_{l+1}^k$, then
\begin{equation*}
  \norm[H(\dd,\Omega)]{\omega-\mathcal I_n^l\omega}
  \le C_{k,d,l}\Big(E_{l,k,d}(n)\norm[\mathcal X_l^k]{\omega} +E_{l+1,k,d}(n)
  \norm[\mathcal X_{l+1}^k]{\dd\omega}\Big).
\end{equation*}
\end{corollary}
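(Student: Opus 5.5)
The plan is to reduce the graph-norm estimate to two applications of Theorem~\ref{thm:L2-approximation}, one in form degree $l$ and one in form degree $l+1$, using the commuting identity $\dd\mathcal I_n^l=\mathcal I_n^{l+1}\dd$ established in Section~\ref{sec:sparse-interpolation}.

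\textbf{Step 1: split the graph norm.} Since
\[
\norm[H(\dd,\Omega)]{\omega-\mathcal I_n^l\omega}^2=\norm[L^2\Lambda^l(\Omega)]{\omega-\mathcal I_n^l\omega}^2+\norm[L^2\Lambda^{l+1}(\Omega)]{\dd(\omega-\mathcal I_n^l\omega)}^2,
\]
it suffices to bound each term and then use $\sqrt{a^2+b^2}\le a+b$. The first term is bounded directly by Theorem~\ref{thm:L2-approximation}, giving $C_{k,d,l}E_{l,k,d}(n)\norm[\mathcal X_l^k]{\omega}$.

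\textbf{Step 2: commute $\dd$ past the interpolant.} For the second term, write $\dd(\omega-\mathcal I_n^l\omega)=\dd\omega-\mathcal I_n^{l+1}\dd\omega$ using the commuting property. Applying Theorem~\ref{thm:L2-approximation} in degree $l+1$, which is allowed because $\dd\omega\in\mathcal X_{l+1}^k$, gives the bound $C_{k,d,l+1}E_{l+1,k,d}(n)\norm[\mathcal X_{l+1}^k]{\dd\omega}$. Taking the larger of the two constants gives the stated estimate.

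\textbf{Main obstacle: justifying the commutation.} The commuting identity was derived for sufficiently regular forms, so we must check that the hypothesis $\omega\in\mathcal X_l^k$ is enough for it to hold.
\begin{itemize}
\item For $j\notin I$ we have $r_j=k+2$, so $u_I$ has at least one $x_j$-derivative mixed with the other required derivatives.
\item This makes the one-dimensional traces and moments defining $\Pi^0_{j,m}$ well defined and bounded, by Sobolev embedding in $x_j$ applied fiberwise.
\item The identity $\partial_{x_j}H^I_{\bm m}u_I=H^{I\cup\{j\}}_{\bm m}\partial_{x_j}u_I$ then holds blockwise for each $\bm m$.
\item Summing over the finite set $\mathcal M_n$ involves no convergence issue, so $\dd\mathcal I_n^l\omega=\mathcal I_n^{l+1}\dd\omega$ holds in $L^2$.
\end{itemize}
With the commutation established, the rest of the argument is just the triangle inequality described above.
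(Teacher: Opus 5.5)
Your proposal is correct and follows the paper's own argument: rewrite $\dd(\omega-\mathcal I_n^l\omega)=\dd\omega-\mathcal I_n^{l+1}\dd\omega$ using the commuting property, then apply Theorem~\ref{thm:L2-approximation} once to $\omega$ in degree $l$ and once to $\dd\omega$ in degree $l+1$. Your extra check that $\omega\in\mathcal X_l^k$ suffices for the commutation is sound and is left implicit in the paper; in fact, since $r_j=k+2$ for $j\notin I$, one has $\partial_{x_j}u_I\in\mathcal X^k_{I\cup\{j\}}$, so the hypothesis $\dd\omega\in\mathcal X_{l+1}^k$ already follows from $\omega\in\mathcal X_l^k$.
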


Let $h_n=2^{-n}$ be the mesh-size of the $n$th grid level.
For a sufficiently regular $l$-form $\omega$, the $H(\dd)$ error is therefore governed by
$O(h_n^{k+1}|\log h_n|^l)$.  Since the dimension of sparse-grid space $V_{n,k}^l$
is only $O(h_n^{-1}|\log h_n|^{d-1})$, the convergence rate of higher-order sparse-grid methods for differential forms is
approximately $O(N_{\rm dof}^{-k-1})$ modulo logarithmic factor, where $N_{\rm dof}$ is the number of DoFs. In contrast, a full-grid tensor-product finite element method has a convergence rate 
approximately $N_{\rm dof}^{-(k+1)/d}$.

\section{Exactness and discrete stable potentials}
\label{sec:stability}

Because the cube $\Omega$ is contractible, the de Rham complex
\begin{equation*}
  \R\longrightarrow H\Lambda^0(\Omega)
  \xrightarrow{\dd}H\Lambda^1(\Omega)
  \xrightarrow{\dd}\cdots
  \xrightarrow{\dd}H\Lambda^d(\Omega)
  \longrightarrow0
\end{equation*}
is exact. In this section, we construct an explicit homotopy that preserves exactness at the sparse-grid level.

\subsection{A tensor-product homotopy}

On $(0,1)$, for $v,q\in L^2(0,1)$, we define two operators $p$ and $h$ acting on the scalar $0$-form
$v$ and the $1$-form $q\,\dd x$ by
\begin{align*}
    pv&=\left(\int_0^1v(t)\,\dd t\right)1,\qquad
  p(q\,\dd x)=0\,\dd x,\\
hv&=0,\qquad h(q\,\dd x)=Aq:=
  \int_0^xq(t)\,\dd t
  -
  \int_0^1\int_0^yq(t)\,\dd t\,\dd y.
\end{align*}
Here $Aq$ is the primitive of $q$ that satisfies $(Aq)'=q$, $\int_0^1Aq=0$, and a Poincar\'e
inequality gives
\begin{equation}
  \norm[L^2(0,1)]{Aq}
  \le\frac1\pi\norm[L^2(0,1)]{q}.
  \label{eq:centered-primitive-bound}
\end{equation}
For $v\in H^1(0,1)$, the definition of $A$ implies $A(v')
  =v-\left(\int_0^1v(t)\,\dd t\right)1$,
so a direct calculation on the 1D complex
$H^1(0,1)\xrightarrow{\dd}L^2(0,1)\,\dd x$ gives
\begin{equation}\label{eq:1D-homotopy}
  \dd h+h\dd=\id-p,
  \qquad
  \dd p=p\dd=0.
\end{equation}

We now tensorize this 1D construction to construct a multidimensional stable homotopy operator, which can be viewed as an instance of
the tensor trick in algebraic topology
\cite{GugenheimLambeStasheff1991,Berglund2014}, although the derivations there are rather abstract and have not been used in numerical analysis before. Other applications of algebraic-topological techniques in numerical analysis can be found in \cite{ChristiansenMuntheKaasOwren2011}, where the exactness of tensor-product finite element complexes was proved using the K{\"u}nneth theorem.

Let $p_i$ and $h_i$ denote $p$ and $h$ acting in
coordinate $x_i$.  Their
action on a component $u_I\dd x_I$ is explicit. In particular,
\begin{equation}
  p_i(u_I\dd x_I)
  =
  \begin{cases}
    \Big(\int_0^1u_I(x_1,\ldots,t,\ldots,x_d)\,\dd t\Big)\dd x_I,&i\notin I,\\
    0,&i\in I.
  \end{cases}
  \label{eq:coordinate-mean}
\end{equation}
If $i=i_r$ is the $r$th element of the ordered set $I$, let $A_i$ be the operator $A$ acting only in the $x_i$ coordinate and set
\begin{equation}
  h_i(u_I\dd x_I)
  =
  \begin{cases}
    0,&i\notin I,\\
    (-1)^{r-1}(A_i u_I)\dd x_{I\setminus\{i\}},&i=i_r\in I.
  \end{cases}
  \label{eq:coordinate-homotopy}
\end{equation}
The sign in \eqref{eq:coordinate-homotopy} is the Koszul sign generated by
moving a degree-lowering operator past the preceding one-form factors. We further define
\begin{equation*}
    P_{<1}:=\id,\qquad
    P_{<i}:=p_1\cdots p_{i-1},\qquad \mathcal{H}:=\sum_{i=1}^dP_{<i}h_i.
\end{equation*}
In addition, we consider the operator
\[
\mathcal{P}:=p_1p_2\cdots p_d,
\]
which vanishes on $l$-forms with $l\geq1$, while
$\mathcal{P}v=(\int_\Omega v\,\dd\bm x)1$ for a 0-form $v$.
The next lemma is the key tool for proving discrete exactness.
\begin{lemma}[Multidimensional homotopy]\label{lem:homotopy}
 For smooth $l$-forms with $0\leq l\leq d$, it holds that
  \[
  \dd\mathcal{H} + \mathcal{H}\dd=\id-\mathcal{P}.
  \]
\end{lemma}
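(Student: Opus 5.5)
The proof is a telescoping argument built from the one-dimensional identity \eqref{eq:1D-homotopy}, applied coordinatewise. Write $\dd=\sum_{j=1}^d\dd_j$, where $\dd_j(u_I\dd x_I)=\epsilon(j,I)\,\partial_{x_j}u_I\,\dd x_{I\cup\{j\}}$ for $j\notin I$ and $\dd_j(u_I\dd x_I)=0$ for $j\in I$.

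The first task is to collect the graded commutation relations among these coordinate operators. For $i\ne j$:
\begin{itemize}
\item $p_i$ commutes with $\dd_j$ and with $h_j$, since it acts in a different variable and preserves degree, so no sign arises;
\item $h_i$ anticommutes with $\dd_j$, i.e. $h_i\dd_j+\dd_jh_i=0$;
\item in the single coordinate $x_i$, $\dd_ih_i+h_i\dd_i=\id-p_i$ and $\dd_ip_i=p_i\dd_i=0$.
\end{itemize}

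The anticommutation $h_i\dd_j=-\dd_jh_i$ is the sign bookkeeping step, and I expect it to be the main obstacle. Take a component $u_I\dd x_I$ with $i=i_r\in I$ and $j\notin I$; otherwise both sides vanish. Then compare two quantities:
\begin{itemize}
\item the position of $i$ in $I\cup\{j\}$, which is $r$ or $r+1$ depending on whether $j>i$ or $j<i$;
\item the sign $\epsilon(j,I)$ versus $\epsilon(j,I\setminus\{i\})$, which differ by exactly one factor $-1$ precisely when $i<j$.
\end{itemize}
In both cases the product of signs flips, giving anticommutation. The one-dimensional identity in coordinate $x_i$ with the transverse variables as parameters should also be checked with the Koszul sign. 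On $u_I\dd x_I$ with $i=i_r\in I$, we have $h_i\dd_i=0$. Also $\dd_ih_i$ reinserts $\dd x_i$ at position $r$ with sign $\epsilon(i,I\setminus\{i\})=(-1)^{r-1}$, which cancels the factor $(-1)^{r-1}$ in $h_i$ and returns $\partial_{x_i}A_iu_I=u_I$. On $u_I\dd x_I$ with $i\notin I$, we have $\dd_ih_i=0$. Moreover $h_i\dd_i$ inserts $\dd x_i$ at some position $s$ with sign $(-1)^{s-1}$ and then removes it with the same sign, giving $A_i\partial_{x_i}u_I=u_I-p_iu_I$. So the coordinate identity $\dd_ih_i+h_i\dd_i=\id-p_i$ holds on every component.

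With these relations the main computation is
\[
\dd\mathcal H+\mathcal H\dd=\sum_{i}P_{<i}\Bigl(\sum_j(\dd_jh_i+h_i\dd_j)\Bigr),
\]
using that $P_{<i}$ commutes with every $\dd_j$. For $j\ne i$ the terms cancel by anticommutation, so only $j=i$ survives. This leaves $\sum_iP_{<i}(\id-p_i)=\sum_i(P_{<i}-P_{<i+1})=\id-\mathcal P$, a telescoping sum with $P_{<d+1}=\mathcal P$.

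One point needs justification: $P_{<i}$ commutes with $\dd_j$ even for $j<i$. Here $p_j\dd_j=0$, and $\dd_jp_j=0$ because $p_j$ output is independent of $x_j$ or vanishes. So both $P_{<i}\dd_j$ and $\dd_jP_{<i}$ are zero and the interchange is harmless.

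Finally, on $0$-forms $\mathcal H=0$ and $\mathcal H\dd v=\sum_iP_{<i}A_i\partial_{x_i}v$ by the same telescoping, so no separate degree cases are needed. Smoothness makes all operator compositions and Fubini interchanges legitimate.
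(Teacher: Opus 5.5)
Your proposal is correct and follows essentially the same route as the paper: the coordinate splitting $\dd=\sum_j\dd_j$, the anticommutation $\dd_jh_i+h_i\dd_j=0$ for $i\neq j$, the vanishing of cross terms through $p_j\dd_j=\dd_jp_j=0$, and the telescoping $\sum_iP_{<i}(\id-p_i)=\id-\mathcal P$. The differences are cosmetic: you first pull $P_{<i}$ to the front and then cancel every $j\neq i$ by anticommutation, whereas the paper treats $j>i$ and $j<i$ separately; and you explicitly check the Koszul signs in $\dd_ih_i+h_i\dd_i=\id-p_i$, which the paper takes directly from the one-dimensional identity.
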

\begin{proof}
Write $\dd=\sum_{j=1}^d\dd_j$, where $\dd_j$ differentiates only in the
$x_j$ coordinate.  The 1D identity \eqref{eq:1D-homotopy} gives
\begin{subequations}
  \begin{align}
      \dd_i h_i+h_i\dd_i&=\id-p_i,\label{eq:xi-homotopy}\\
  \dd_i p_i&=p_i\dd_i=0.
  \end{align}
\end{subequations}
We claim
from \eqref{eq:coordinate-homotopy} that
\begin{equation}\label{eq:distinct_id}
  \dd_i h_j+h_j\dd_i=0,
  \qquad i\ne j.
\end{equation}
Indeed, both compositions vanish if $j\notin I$ or $i\in I$. It remains to
consider $j\in I$ and $i\notin I$. Let $u_I\dd x_I$ be a smooth
differential form. Using the orientation sign $\epsilon$ defined in
Section~\ref{sec:sparse-interpolation},
\begin{align*}
  \dd_i h_j(u_I\dd x_I)
  &={}
  \epsilon(j,I)\epsilon(i,I\setminus\{j\})
  (\partial_{x_i}A_j u_I)
  \dd x_{(I\setminus\{j\})\cup\{i\}},\\
  h_j\dd_i(u_I\dd x_I)
  &={}
  \epsilon(i,I)\epsilon(j,I\cup\{i\})
  (A_j\partial_{x_i}u_I)
  \dd x_{(I\cup\{i\})\setminus\{j\}}.
\end{align*}
The two ordered index sets are equal, and differentiation in $x_i$
commutes with integration in $x_j$, so
$\partial_{x_i}A_j u_I=A_j\partial_{x_i}u_I$.  Finally,
\[
  \epsilon(j,I)\epsilon(i,I\setminus\{j\})
  =-\epsilon(i,I)\epsilon(j,I\cup\{i\}),
\]
because inserting $\dd x_i$ and removing $\dd x_j$ in the opposite order
changes the orientation by one transposition.  Hence the two displayed
terms cancel.

We may now compute in arbitrary dimension.  The operators $\dd_j$ and
$h_j$ commute with $p_r$ whenever $r\ne j$.  Therefore
\begin{align*}
  \dd\mathcal{H}+\mathcal{H}\dd
  &={}
  \sum_{i=1}^d\sum_{j=1}^d
  \bigl(\dd_jP_{<i}h_i+P_{<i}h_i\dd_j\bigr).
\end{align*}
For $j>i$, the two terms cancel by the identity \eqref{eq:distinct_id}.  For $j<i$, the product $P_{<i}$ contains $p_j$, and
$\dd_jp_j=p_j\dd_j=0$, so both terms vanish. Therefore, only summands with $j=i$ remain. Using \eqref{eq:xi-homotopy} yields
\begin{align*}
  \dd\mathcal{H}+\mathcal{H}\dd
  &=\sum_{i=1}^dP_{<i}(\dd_i h_i+h_i\dd_i)=\sum_{i=1}^dP_{<i}(\id-p_i)\\
  &=(\id-p_1)+p_1(\id-p_2)+\cdots
    +p_1\cdots p_{d-1}(\id-p_d)\\
  &=\id-p_1\cdots p_d
  =\id-\mathcal{P}.
\end{align*}
The proof is complete.
\end{proof}

\begin{lemma}[Boundedness of homotopy operator]\label{lem:bounded_homotopy}
  For every $\omega\in L^2\Lambda^l(\Omega)$,
\begin{equation*}
  \norm[L^2\Lambda^{l-1}(\Omega)]{\mathcal{H}\omega}
  \le\frac1\pi
  \norm[L^2\Lambda^l(\Omega)]{\omega},
  \qquad 1\le l\le d.
\end{equation*}
\end{lemma}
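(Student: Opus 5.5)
The plan is to avoid the triangle inequality over the $d$ summands of $\mathcal H$, which would only give the constant $d/\pi$. Instead, I will compute $\mathcal H\omega$ componentwise and show that, in each output component, the contributions are mutually $L^2$-orthogonal. Fix $\omega=\sum_{|I|=l}u_I\,\dd x_I$ and an output index set $J$ with $|J|=l-1$.

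\textbf{Step 1: identify which terms reach $\dd x_J$.} By \eqref{eq:coordinate-homotopy}, $h_i(u_I\dd x_I)$ is nonzero only if $i\in I$, and it lands in $\dd x_{I\setminus\{i\}}$. Hence only $I=J\cup\{i\}$ with $i\notin J$ contributes to $\dd x_J$. This contribution is $\pm P_{<i}A_iu_{J\cup\{i\}}\,\dd x_J$. By \eqref{eq:coordinate-mean}, $p_j$ annihilates a component whenever $j$ is a form direction. So if some $j\in J$ satisfies $j<i$, the factor $p_j$ in $P_{<i}$ kills the term. Consequently the only surviving contributions to $\dd x_J$ come from indices $i<\min J$, with the convention $\min\varnothing=d+1$. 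These are
\[
  f_i:=\pm\,p_1\cdots p_{i-1}A_i\,u_{J\cup\{i\}},\qquad i<\min J .
\]
Equivalently, each $I$ contributes through exactly one term, namely $i=\min I$. This fact will be used in the final summation.

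\textbf{Step 2: orthogonality.} For $i<i'$, both less than $\min J$, the function $f_{i'}$ is independent of $x_i$, because $p_i$ occurs in $P_{<i'}$ and $p_i,A_{i'}$ act in different variables. On the other hand, $f_i$ has zero mean in $x_i$ for a.e.\ value of the remaining variables, since $\int_0^1Aq=0$ and the averages $p_1,\dots,p_{i-1}$ act in other variables. Fubini's theorem then gives $\inner[L^2(\Omega)]{f_i}{f_{i'}}=0$. Therefore
\[
  \norm[L^2(\Omega)]{(\mathcal H\omega)_J}^2=\sum_{i<\min J}\norm[L^2(\Omega)]{f_i}^2 .
\]

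\textbf{Step 3: bound each piece and sum.} Each $p_j$ is an $L^2$ contraction: it is the orthogonal projection onto functions constant in $x_j$ (alternatively, use Cauchy--Schwarz in $x_j$). The one-dimensional bound \eqref{eq:centered-primitive-bound} applied in $x_i$, together with Fubini, gives $\norm{A_iu}\le\pi^{-1}\norm{u}$. Hence $\norm{f_i}\le\pi^{-1}\norm{u_{J\cup\{i\}}}$. Summing over $J$, and using that the map $(J,i)\mapsto I=J\cup\{i\}$ with $i<\min J$ is a bijection onto $\{I:|I|=l\}$ (inverse $i=\min I$), yields
\[
  \norm{\mathcal H\omega}^2\le\pi^{-2}\sum_{|I|=l}\norm{u_I}^2=\pi^{-2}\norm{\omega}^2 .
\]

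\textbf{Main obstacle.} The delicate step is Step 2, the orthogonality that rescues the sharp constant $1/\pi$. Its proof relies on combining the zero-mean normalization of $A$ with the preceding averages $P_{<i}$. Everything else follows directly from the one-dimensional estimate and Fubini's theorem.
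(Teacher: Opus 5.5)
Your proof is correct and is essentially the paper's argument. Both reduce to the single surviving term $i=\min I$ for each $u_I\,\dd x_I$, bound it by $\pi^{-1}\norm[L^2(\Omega)]{u_I}$ using the contractivity of the $p_j$ and \eqref{eq:centered-primitive-bound}, and get orthogonality from the zero $x_i$-mean of $P_{<i}A_iu_I$ against the $x_i$-independence forced by $p_i\in P_{<i'}$. Grouping the terms by the output component $\dd x_J$ via $I\leftrightarrow(J,\min I)$, instead of the paper's pairwise comparison of $T_I$ and $T_{I'}$ in the cases $\check I\neq\check I'$ and $\check I=\check I'$, is only a bookkeeping difference.
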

\begin{proof}
Let $\omega=\sum_{I\subseteq[d],\, |I|=l}u_I\dd x_I$ be an $l$-form. Denote the $i$th summand of \(\mathcal{H}\) by $\mathcal{H}_i:=P_{<i}h_i$. The component formulas \eqref{eq:coordinate-mean} and
\eqref{eq:coordinate-homotopy} imply
\[
  \mathcal{H}_i(u_I\dd x_I)=0
  \qquad\text{for } i\neq\min I.
\]
Indeed, \(i\notin I\) gives \(h_i(u_I\dd x_I)=0\).  If \(i\in I\) but
\(j:=\min I<i\), then \(P_{<i}\) contains \(p_j\), and \(p_j\) annihilates
the one-form factor \(\dd x_j\).
Consequently,
\[
T_I:=\mathcal{H}_i(u_I\dd x_I)
      =(P_{<i}A_i u_I)\dd x_{\check{I}}\qquad
  \text{for } i=\min I,
\]
where \(\check{I}:=I\setminus\{\min I\}\). By the Cauchy--Schwarz inequality in \(x_j\) and Fubini's theorem, each
\(p_j\) satisfies \(\norm[L^2(\Omega)]{p_j\omega}\le\norm[L^2(\Omega)]{\omega}\) and thus $\norm[L^2(\Omega)]{P_{<i}\omega}\le\norm[L^2(\Omega)]{\omega}$.  Applying
\eqref{eq:centered-primitive-bound} in \(x_i\) then gives
\begin{equation}\label{eq:TIbound}
    \norm[L^2(\Omega)]{T_I}
  \le \norm[L^2(\Omega)]{A_i u_I}
  \le \frac1\pi\norm[L^2(\Omega)]{u_I}.
\end{equation}

We next show \(\{T_I:|I|=l\}\) are pairwise
orthogonal.  First,
\[
  (T_I,T_J)_{L^2\Lambda^{l-1}}=0\quad\text{if }\check{I}\ne \check{J},
\]
because \(\dd x_{\check{I}}\) and \(\dd x_{\check{J}}\) are defined to be
orthogonal. Then we
consider \(\check{I}=\check{J}\) while $I\neq J$.  Write \(i=\min I\), \(j=\min J\neq i\), and assume \(i<j\)
without loss of generality. Since \(A_i\) is the zero mean-value primitive in $x_i$, we have
\[
  \int_0^1 P_{<i}A_i u_I\,\dd x_i=0.
\]
On the other hand, \(P_{<j}\) contains \(p_i\), so the coefficient of
\(T_J\) is independent of \(x_i\). Let
$\dd\bm x_{\hat{i}}:=\prod_{s\in[d]\setminus\{i\}}\dd x_s$.
Fubini's theorem therefore yields
\[
  (T_I,T_J)_{L^2\Lambda^{l-1}}
  =
  \int_{(0,1)^{d-1}}
  \left[
    \int_0^1
    (P_{<i}A_i u_I)(P_{<j}A_j u_J)\,\dd x_i
  \right]\dd\bm x_{\hat{i}}
  =0.
\]
Thus the family \(\{T_I:|I|=l\}\) is pairwise orthogonal.

It then follows from the orthogonality and \eqref{eq:TIbound} that
\[
  \norm[L^2\Lambda^{l-1}(\Omega)]{\mathcal{H}\omega}^2
  =
  \sum_{|I|=l}\norm[L^2(\Omega)]{T_I}^2
  \le
  \frac1{\pi^2}\sum_{|I|=l}\norm[L^2(\Omega)]{u_I}^2
  =
  \frac1{\pi^2}\norm[L^2\Lambda^l(\Omega)]{\omega}^2.
\]
The proof is complete.
\end{proof}
Smooth forms are dense in $H\Lambda^l(\Omega)$.  The $L^2$-boundedness of
$\mathcal{H}$ and $\mathcal{P}$, together with the closedness of $\dd$, therefore extends
the homotopy identity to $H\Lambda^l(\Omega)$. More precisely,
\begin{align*}
  \mathcal{H}\omega&\in H\Lambda^{l-1}(\Omega), &&1\le l\le d,\\
  \dd\mathcal{H}\omega+\mathcal{H}\dd\omega
  &=\omega-\mathcal{P}\omega, &&0\le l\le d,
  \quad \omega\in H\Lambda^l(\Omega).
\end{align*}

\subsection{Exactness and stable potentials}

At lowest order $(k=0)$, \citet{GradinaruHiptmair2003} proved the terminal
stable-potential estimate from degree $d-1$ to degree $d$, while
\cite{Gradinaru2002} treated several additional form degrees, but did not
establish the stable-potential estimate for all $l$ in arbitrary dimension, even in the lowest order case $k=0$.

With the help of the homotopy operator $\mathcal{H}$, we are able to establish the existence of stable discrete potentials for arbitrary higher-order sparse-grid forms in arbitrary dimension. The stability is robust with respect to the polynomial degree. To this end, we need to show that $\mathcal{H}$ preserves sparse-grid differential forms.

\begin{lemma}\label{lem:sparse-homotopy-preservation}
  For $1\leq l\leq d$, we have
  \begin{equation*}
  \mathcal{H} V_{n,k}^l\subseteq V_{n,k}^{l-1}.
\end{equation*}
\end{lemma}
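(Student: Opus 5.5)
By linearity it suffices to treat a single basis block. We fix $I$ with $|I|=l$, a level $\bm m\in\mathcal M_n$, and a function $u_I\in W_{\bm m}^{I,k}$, and show that $\mathcal H(u_I\dd x_I)\in V_{n,k}^{l-1}$. We may take $u_I=\Psi_{\bm m,\bm\alpha}^{I,k}$, a pure tensor product. The proof of Lemma~\ref{lem:bounded_homotopy} already shows that only the summand with $i=\min I$ survives. Hence
\[
\mathcal H(u_I\dd x_I)=\Bigl(\prod_{s<i}p_s\Bigr)A_i u_I\,\dd x_{\check I},\qquad i=\min I .
\]
Since $i=\min I$, every $s<i$ lies in $I^c$, so each $p_s$ acts on a continuous factor $\theta^{k+1}_{m_s,\alpha_s}(x_s)$. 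The operator $A_i$ acts on the discontinuous factor $\psi^k_{m_i,\alpha_i}(x_i)$. All other factors are untouched, and their roles relative to the new index set $\check I$ are correct: directions in $\check I$ keep $Z$-factors, and directions in $I^c$ keep $Y$-factors.

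The key step is a pair of one-dimensional facts. First, we show $A\psi^k_{m,\alpha}\in\bigoplus_{m'\le m}Y^{k+1}_{m'}$; in fact it lies in $Y_m^{k+1}$ for $m\ge1$. Indeed, for $m\ge1$ the function $\psi^k_{m,\alpha}$ has zero mean on $(0,1)$, so its primitive from $0$ is exactly $\theta^{k+1}_{m,\alpha}$, since $\theta$ is supported in $K$ and vanishes outside. We then get $A\psi=\theta^{k+1}_{m,\alpha}-\bigl(\int_0^1\theta^{k+1}_{m,\alpha}\bigr)\theta^{k+1}_{0,\star}$, which lies in $Y^{k+1}_m\oplus Y_0^{k+1}$. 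For $m=0$ we have $A\psi^k_{0,r}\in S_0^{k+1}=Y_0^{k+1}$. Second, we show $p\,\theta^{k+1}_{m,\alpha}=c\,\theta_{0,\star}^{k+1}\in Y_0^{k+1}$, since $p$ outputs constants.

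With these facts, the image of the block is a sum of tensor products lying in blocks $W^{\check I,k}_{\bm m'}$ with $\bm m'\le\bm m$ componentwise. Here $m'_s=0$ for $s<i$, $m'_i\in\{0,m_i\}$, and $m'_s=m_s$ otherwise. Since $\mathcal M_n$ is downward closed, $|\bm m'|_1\le|\bm m|_1\le n+d-1$, so $\bm m'\in\mathcal M_n$ and the image lies in $V^{l-1}_{n,k}$ by \eqref{eq:sparse-form-space}.

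The main obstacle is the bookkeeping for the direction $x_i$. It changes from a form direction, with a $Z$-factor, to a transverse direction, with a $Y$-factor. We must check that the new factor lies in the continuous hierarchy at level at most $m_i$. This is where the mean-correction term in $A$ has to land in $Y_0^{k+1}$, and where the vanishing of $\theta$ at the parent endpoints is used. Once this is verified, the downward-closedness of $\mathcal M_n$ finishes the proof. The sign $(-1)^{r-1}$ equals $1$ here because $i$ is the first element of $I$, and in any case signs are irrelevant to membership.
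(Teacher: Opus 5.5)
Your proof is correct and follows the paper's argument. You reduce to the surviving summand $i=\min I$, use the one-dimensional inclusions $p(Y_m^{k+1})\subseteq Y_0^{k+1}$ and $A(Z_m^k)\subseteq Y_m^{k+1}+Y_0^{k+1}$, and finish with the downward closedness of $\mathcal M_n$; your check that the primitive of $\psi_{m,\alpha}^k$ from $0$ equals $\theta_{m,\alpha}^{k+1}$ spells out a step the paper only asserts. One small slip: the aside that $A\psi_{m,\alpha}^k$ lies in $Y_m^{k+1}$ itself holds only for $k\ge1$, because $\int_0^1\theta_{m,\alpha}^{k+1}\,\dd x=\int_K\psi_{m,\alpha}^k(s)(b_K-s)\,\dd s$ is nonzero in the Haar case $k=0$, but your displayed formula and the later use of $m_i'\in\{0,m_i\}$ already account for this.
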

\begin{proof}
  We start with the actions of 1D operators $p$ and $h$:
\begin{equation} \label{eq:1D-sparse-preservation}
  p\bigl(Y_m^{k+1}\bigr)\subseteq Y_0^{k+1},\qquad
  h\bigl(Z_m^{k}\,\dd x\bigr)
  \subseteq Y_m^{k+1}+Y_0^{k+1},
\end{equation}
because $p$ returns a constant and $h$ returns an antiderivative.
Recall $Y_m^{k+1}$ is a scalar continuous 1D detail space of $0$-forms, whereas
$Z_m^{k}\dd x$ is the discontinuous 1D detail space of $1$-forms.

Recall the decomposition
$\mathcal{H}=\sum_{i=1}^d\mathcal{H}_i$ with $\mathcal{H}_i=P_{<i}h_i$ introduced in the proof of Lemma \ref{lem:homotopy}. Let $I\subseteq[d]$ be an ordered index set with $|I|=l$.  Applying $\mathcal{H}_i$ to the multidimensional detail space
$W_{\bm m}^{I,k}\dd x_I$ in \eqref{eq:WmI} and using the
formulas \eqref{eq:coordinate-mean} and
\eqref{eq:coordinate-homotopy} gives
\[
  \mathcal{H}_i\bigl(W_{\bm m}^{I,k}\dd x_I\bigr)=0
  \qquad\text{for }
  i\neq\min I.
\]
For the grid level $\bm{m}=(m_1,\ldots,m_d)$ and \(i=\min I\), define
\[
  \mathcal R_i(\bm m)
  :=
  \left\{
    \bm r\in\N_0^d: r_i\in\{0,m_i\},\, r_s=0\text{ for }s<i,\, r_s=m_s \text{ for }s>i \right\}.
\]
The 1D inclusions \eqref{eq:1D-sparse-preservation}
then yield the blockwise relation
\[
  \mathcal{H}_i\bigl(W_{\bm m}^{I,k}\dd x_I\bigr)
  \subseteq
  \bigoplus_{\bm r\in\mathcal R_i(\bm m)}
  W_{\bm r}^{\check{I},k}\dd x_{\check{I}},
  \qquad i=\min I.
\]
Here the coordinates \(s<i\) are sent to level zero by \(p_s\), the
\(i\)th factor is sent by \(h_i\) to level \(m_i\) or level zero, and the
remaining factors are unchanged.  In particular,
\[
  \bm r\in\mathcal R_i(\bm m)
  \quad\Longrightarrow\quad
  \bm 0\le\bm r\le\bm m.
\]
By definition, \(\mathcal{M}_n\) is downward closed, namely,
\[
  \bm m\in\mathcal{M}_n,\quad \bm 0\le\bm r\le\bm m
  \quad\Longrightarrow\quad
  \bm r\in\mathcal{M}_n.
\]
Summing over all form indices \(I\) and all \(\bm m\in\mathcal{M}_n\) completes the proof.
\end{proof}

Define the sparse-grid cocycle and coboundary spaces
\[
\mathcal{Z}_{n,k}^l=\ker(\dd|_{V_{n,k}^l}),\qquad\mathcal{B}_{n,k}^l=\dd V_{n,k}^{l-1}.
\]
The cochain property $\dd^2=0$ implies $\mathcal{B}_{n,k}^l\subseteq\mathcal{Z}_{n,k}^l$.
We present the main result on the existence of stable discrete potentials in the next theorem.
\begin{theorem}[Stable discrete potential]
\label{thm:stable-potential}
For $1\le l\le d$ and every $v_h\in\mathcal{Z}_{n,k}^l$, the form
$w_h=\mathcal{H} v_h$ is a member of $V_{n,k}^{l-1}$ that satisfies
\begin{equation*}
  \dd w_h=v_h,\qquad
  \norm[L^2(\Omega)]{w_h}\le\frac1\pi\norm[L^2(\Omega)]{v_h}.
\end{equation*}
In particular, the sparse-grid complex $(V_{n,k}^\bullet,\dd^\bullet)$ is exact:
\begin{equation*}
  \ker(\dd|_{V_{n,k}^0})=\R,\qquad
  \mathcal{Z}_{n,k}^l=\mathcal{B}_{n,k}^l,\qquad 1\le l\le d.
\end{equation*}
\end{theorem}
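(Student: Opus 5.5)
The plan is to combine the three preceding lemmas directly. Given $v_h\in\mathcal Z_{n,k}^l$ with $1\le l\le d$, Lemma~\ref{lem:sparse-homotopy-preservation} gives $w_h=\mathcal H v_h\in V_{n,k}^{l-1}$ at once. Since $V_{n,k}^l$ consists of piecewise polynomial forms that are $H\Lambda^l$-conforming, $v_h\in H\Lambda^l(\Omega)$. The extended homotopy identity (Lemma~\ref{lem:homotopy} together with the density argument after Lemma~\ref{lem:bounded_homotopy}) therefore applies and yields
\[
  \dd\mathcal H v_h+\mathcal H\dd v_h=v_h-\mathcal P v_h .
\]
Here $\dd v_h=0$ because $v_h$ is a cocycle. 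Moreover, $\mathcal P v_h=0$ because $\mathcal P$ vanishes on $l$-forms with $l\ge1$: each factor $p_i$ with $i\in I$ annihilates the component $u_I\dd x_I$ by \eqref{eq:coordinate-mean}. Hence $\dd w_h=v_h$. The bound $\norm[L^2]{w_h}\le\pi^{-1}\norm[L^2]{v_h}$ is exactly Lemma~\ref{lem:bounded_homotopy}. Since that constant involves neither $n$ nor $k$, the potential is robust in both level and degree.

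Exactness then follows in two parts. For $1\le l\le d$, the construction gives $\mathcal Z_{n,k}^l\subseteq\mathcal B_{n,k}^l$, and the reverse inclusion holds by $\dd^2=0$. For $l=0$, take $v\in V_{n,k}^0$ with $\dd v=0$. On the connected domain $\Omega$, $v$ is then a constant function. Conversely, the constant $1=\prod_s\theta_{0,\star}^{k+1}(x_s)$ lies in $W_{\bm 0}^{\varnothing,k}$ with $\bm 0\in\mathcal M_n$, so $\ker(\dd|_{V^0_{n,k}})=\R$. Alternatively, the homotopy identity at $l=0$ gives $v=\mathcal P v$, since $\mathcal H v=0$ for a $0$-form and $\dd v=0$.

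The only step needing care is applying the homotopy identity to the discrete forms. Lemma~\ref{lem:homotopy} was proved for smooth forms, so one must confirm that $v_h$ lies in $H\Lambda^l(\Omega)$, which holds because the sparse spaces are sums of tensor products of $H^1$-conforming continuous factors in the transverse directions and $L^2$ factors in the form directions. Alternatively, the identity can be checked directly on these tensor-product pieces: each one-dimensional factor in a transverse direction is $H^1$, so the commutation relations used in the proof of Lemma~\ref{lem:homotopy} hold verbatim. I expect no further obstacle, since all the substantive work (preservation of the sparse index set and the sharp $1/\pi$ bound) is already contained in the earlier lemmas.
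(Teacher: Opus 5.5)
Your proposal is correct and follows essentially the same route as the paper: membership of $w_h$ via Lemma~\ref{lem:sparse-homotopy-preservation}, then the homotopy identity with $\dd v_h=0$ and $\mathcal P v_h=0$, the $1/\pi$ bound from Lemma~\ref{lem:bounded_homotopy}, and $\mathcal Z_{n,k}^l=\mathcal B_{n,k}^l$ from the constructed potential together with $\dd^2=0$. You also check explicitly that $v_h\in H\Lambda^l(\Omega)$, so that the density-extended identity applies, and that the constant $1$ lies in $V_{n,k}^0$; the paper's proof leaves both points implicit.
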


\begin{proof}
  Since $l\ge1$, $\mathcal{P} v_h=0$. The homotopy in Lemma \ref{lem:homotopy} implies
\[
  v_h=(\dd\mathcal{H}+\mathcal{H}\dd)v_h=\dd\mathcal{H} v_h.
\]
The $L^2$ boundedness of $w_h=\mathcal{H} v_h$ follows from
Lemma \ref{lem:bounded_homotopy}.  The equality $\mathcal{B}_{n,k}^l=\mathcal{Z}_{n,k}^l$ then follows from $\mathcal{B}_{n,k}^l\subseteq\mathcal{Z}_{n,k}^l$, and the constructed potential.

The identity $\ker(\dd|_{V_{n,k}^0})=\R$ follows because $\dd$ maps $0$-forms to $1$-forms as the gradient operator.
\end{proof}

Consider the discrete Hodge decomposition
\[
V_{n,k}^l=\mathcal{Z}_{n,k}^l\oplus_{L^2}\mathcal{Z}_{n,k}^{l,\perp}.
\]
Theorem \ref{thm:stable-potential} implies the following discrete Poincar\'e inequality:
\begin{equation}\label{eq:Poincare}
  \|w_h\|_{L^2(\Omega)}\leq \frac{1}{\pi}\|\dd w_h\|_{L^2(\Omega)},\qquad\forall w_h\in\mathcal{Z}_{n,k}^{l,\perp}.
\end{equation}

A distinct feature of our result is that the stability constant in Theorem \ref{thm:stable-potential} and \eqref{eq:Poincare} is $1/\pi$, and thus uniform with respect to polynomial degree $k$. The same analysis leads to existence of stable and polynomial-degree-robust discrete potential on full tensor-product grids. 
\begin{corollary}
For $1\le l\le d$ and every $v_h\in V_{n,k}^{l,\rm FG}$ with $\dd v_h=0$, the form
$w_h=\mathcal{H} v_h$ is a member of $V_{n,k}^{l-1,\rm FG}$ that satisfies
\begin{equation*}
  \dd w_h=v_h,\qquad
  \norm[L^2(\Omega)]{w_h}\le\frac1\pi\norm[L^2(\Omega)]{v_h}.
\end{equation*}
\end{corollary}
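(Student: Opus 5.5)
The plan is to repeat the proof of Theorem~\ref{thm:stable-potential} word for word, with the sparse index set $\mathcal M_n$ replaced by the full index set $\mathcal F_n=\{\bm m:|\bm m|_\infty\le n\}$. Only three ingredients enter that proof: the homotopy identity of Lemma~\ref{lem:homotopy} (as extended to $H\Lambda^l(\Omega)$ by density), the $L^2$ bound of Lemma~\ref{lem:bounded_homotopy}, and the preservation property of Lemma~\ref{lem:sparse-homotopy-preservation}. The first two concern continuous forms and do not depend on the discrete space. Only the third must be re-checked, namely $\mathcal H V_{n,k}^{l,\mathrm{FG}}\subseteq V_{n,k}^{l-1,\mathrm{FG}}$.

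For the preservation step I use the hierarchical representation
\[
V_{n,k}^{l,\mathrm{FG}}=\bigoplus_{|I|=l}\bigoplus_{\bm m\in\mathcal F_n}W_{\bm m}^{I,k}\,\dd x_I .
\]
The blockwise argument in the proof of Lemma~\ref{lem:sparse-homotopy-preservation} never uses the particular shape of $\mathcal M_n$. It shows that $\mathcal H_i$ kills $W_{\bm m}^{I,k}\dd x_I$ unless $i=\min I$. In that case $\mathcal H_i$ maps the block into $\bigoplus_{\bm r\in\mathcal R_i(\bm m)}W_{\bm r}^{\check I,k}\dd x_{\check I}$, and every such $\bm r$ satisfies $\bm 0\le\bm r\le\bm m$. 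So the argument only requires the index set to be downward closed.

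That property holds for $\mathcal F_n$. If $|\bm m|_\infty\le n$ and $\bm 0\le\bm r\le\bm m$, then $|\bm r|_\infty\le n$. Summing over $I$ and $\bm m\in\mathcal F_n$ therefore gives the inclusion. Alternatively, one can argue without hierarchies. Each 1D factor is either averaged by $p$, which returns a constant in $S_n^{k+1}$; left unchanged; or, in the $h$ direction, a piecewise-$\mathbb P_k$ function in $Q_n^k$ mapped by $A$ to its zero-mean primitive. That primitive is continuous and piecewise $\mathbb P_{k+1}$, hence lies in $S_n^{k+1}$.

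The conclusion then follows as before. Take $v_h\in V_{n,k}^{l,\mathrm{FG}}$ with $\dd v_h=0$ and $l\ge1$. Then $\mathcal Pv_h=0$, and the homotopy identity gives $v_h=\dd\mathcal Hv_h$. Set $w_h=\mathcal Hv_h$; it lies in $V_{n,k}^{l-1,\mathrm{FG}}$ by the preservation step. Lemma~\ref{lem:bounded_homotopy} yields $\|w_h\|\le\pi^{-1}\|v_h\|$. The homotopy identity needs $v_h\in H\Lambda^l(\Omega)$, which holds because the full-grid space is conforming, being assembled from $\mathcal Q^-_{k+1}\Lambda^l$. The constant $1/\pi$ is inherited directly, so it is independent of both $n$ and $k$.

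No step here is genuinely difficult. The only point that needs care is the preservation step: one must make sure the sparse-grid preservation argument used nothing but downward closedness, and that the level-zero components produced by $p_s$ and by the constant part of $A$ stay within $\mathcal F_n$. Both facts are immediate.
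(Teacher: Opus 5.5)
Your proposal is correct and follows the paper's route. The paper states only that ``the same analysis'' as in Theorem~\ref{thm:stable-potential} applies, and the one point that needs checking is the one you isolate: the preservation argument of Lemma~\ref{lem:sparse-homotopy-preservation} uses only downward closedness of the index set, and $\mathcal F_n$ has that property. Your alternative tensor-factor check, via $p(S_n^{k+1})\subseteq S_n^{k+1}$ and $A(Q_n^k)\subseteq S_n^{k+1}$, is also valid and bypasses the hierarchical decomposition.
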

In comparison, classical analysis for uniform bound of discrete potential on general triangular grids is not constructive \cite{Boffi2011}.

\subsection{Bounded cochain projection}
As shown in \citep[Theorem~3.7]{ArnoldFalkWinther2010}, a bounded cochain projection follows from the existence of bounded discrete potentials in Theorem \ref{thm:stable-potential}.

For convenience,
operators and spaces with superscript $-1$ or $d+1$ are set to zero.
By the definition of
$\mathcal{B}_{n,k}^{l+1}$, the restriction of
$\dd|_{\mathcal{Z}_{n,k}^{l,\perp}}: \mathcal{Z}_{n,k}^{l,\perp}\rightarrow\mathcal{B}_{n,k}^{l+1}$ is a bijection and we set
$R_{n,k}^{l+1}:=(\dd|_{\mathcal{Z}_{n,k}^{l,\perp}})^{-1}$.
Theorem \ref{thm:stable-potential} or the inequality \eqref{eq:Poincare} simply says that
\begin{equation}\label{eq:right-inverse-bound}
  \norm[L^2(\Omega)]{R_{n,k}^{l+1}b_h}
  \le\frac1\pi\norm[L^2(\Omega)]{b_h},\qquad\forall\, b_h\in\mathcal{B}_{n,k}^{l+1}.
\end{equation}

For any discrete subspace $X$, let $P_X$ denote the $L^2$-orthogonal
projection onto $X$. We shall show that
\begin{equation}\label{eq:cochain-projection}
  \pi_{n,k}^l\omega
  =
  P_{\mathcal{Z}_{n,k}^l}\omega+
  R_{n,k}^{l+1}P_{\mathcal{B}_{n,k}^{l+1}}\dd\omega,\qquad\omega\in H\Lambda^l(\Omega),
\end{equation}
is a commuting projection bounded in the $H(\dd)$ norm.

\begin{theorem}[Bounded commuting projection]
\label{thm:bounded-commuting-projection}
For the sparse-grid complex, the projections
$\pi_{n,k}^l:H\Lambda^l(\Omega)\to V_{n,k}^l$ in \eqref{eq:cochain-projection} satisfy
  \begin{align*}
      \dd\pi_{n,k}^l&=\pi_{n,k}^{l+1}\dd,\qquad 0\le l<d,\\
  \norm[H(\dd)]{\pi_{n,k}^l\omega}
  &\le\sqrt{1+\pi^{-2}}\norm[H(\dd)]{\omega}.
  \end{align*}
\end{theorem}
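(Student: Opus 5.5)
Three things must be checked for the operator $\pi_{n,k}^l$ in \eqref{eq:cochain-projection}: it maps into $V_{n,k}^l$ and is a projection, it commutes with $\dd$, and it obeys the $H(\dd)$ bound. Well-definedness is immediate. $P_{\mathcal{Z}_{n,k}^l}\omega\in\mathcal{Z}_{n,k}^l\subseteq V_{n,k}^l$. Also $P_{\mathcal{B}_{n,k}^{l+1}}\dd\omega\in\mathcal{B}_{n,k}^{l+1}$, so $R_{n,k}^{l+1}$ sends it into $\mathcal{Z}_{n,k}^{l,\perp}\subseteq V_{n,k}^l$. For the projection property, take $\omega=v_h\in V_{n,k}^l$ and split $v_h=z+z^\perp$ according to the discrete Hodge decomposition. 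Then $P_{\mathcal{Z}}v_h=z$. Next, $\dd v_h=\dd z^\perp\in\mathcal{B}_{n,k}^{l+1}$, so its projection onto $\mathcal{B}_{n,k}^{l+1}$ is itself, and $R_{n,k}^{l+1}\dd z^\perp=z^\perp$ by the definition of $R$ as the inverse of $\dd$ on $\mathcal{Z}^{l,\perp}$. Hence $\pi_{n,k}^lv_h=v_h$.

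For the commuting property, the first term $P_{\mathcal{Z}}\omega$ is killed by $\dd$. Since $\dd R_{n,k}^{l+1}=\id$ on $\mathcal{B}_{n,k}^{l+1}$, we get $\dd\pi_{n,k}^l\omega=P_{\mathcal{B}_{n,k}^{l+1}}\dd\omega$. On the other side, $\pi_{n,k}^{l+1}\dd\omega=P_{\mathcal{Z}_{n,k}^{l+1}}\dd\omega+R_{n,k}^{l+2}P_{\mathcal{B}_{n,k}^{l+2}}\dd\dd\omega$, and the second term vanishes because $\dd^2=0$. So the identity reduces to showing $P_{\mathcal{Z}_{n,k}^{l+1}}\dd\omega=P_{\mathcal{B}_{n,k}^{l+1}}\dd\omega$. \textbf{This is the crucial point, and it uses exactness}, i.e.\ $\mathcal{Z}_{n,k}^{l+1}=\mathcal{B}_{n,k}^{l+1}$ from Theorem~\ref{thm:stable-potential}, valid for $l+1\ge1$; the two projections therefore coincide. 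The case $l+1=d$ needs no separate treatment, since Theorem~\ref{thm:stable-potential} covers $l=d$ and the convention that degree-$(d+1)$ objects are zero handles the vanishing term.

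For the bound, the two summands lie in $\mathcal{Z}_{n,k}^l$ and $\mathcal{Z}_{n,k}^{l,\perp}$ respectively, so they are $L^2$-orthogonal. Combining this with \eqref{eq:right-inverse-bound} gives
\[
\norm[L^2]{\pi_{n,k}^l\omega}^2=\norm[L^2]{P_{\mathcal{Z}}\omega}^2+\norm[L^2]{R_{n,k}^{l+1}P_{\mathcal{B}}\dd\omega}^2\le\norm[L^2]{\omega}^2+\pi^{-2}\norm[L^2]{\dd\omega}^2.
\]
Also $\norm[L^2]{\dd\pi_{n,k}^l\omega}=\norm[L^2]{P_{\mathcal{B}}\dd\omega}\le\norm[L^2]{\dd\omega}$. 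Adding the two estimates,
\[
\norm[H(\dd)]{\pi_{n,k}^l\omega}^2\le\norm[L^2]{\omega}^2+(1+\pi^{-2})\norm[L^2]{\dd\omega}^2\le(1+\pi^{-2})\norm[H(\dd)]{\omega}^2,
\]
which is the stated constant. For $l=d$ we have $\dd\omega=0$, so $\pi_{n,k}^d=P_{V_{n,k}^d}$ and the bound holds trivially.

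The main obstacle is not analytic. It is the commuting step, which hinges entirely on the discrete exactness $\mathcal{Z}=\mathcal{B}$ supplied by the homotopy argument; without it, $P_{\mathcal{Z}}$ and $P_{\mathcal{B}}$ would differ by a harmonic part. I would state this dependence explicitly. I would also note that orthogonality of the two summands is what yields the sharp constant $\sqrt{1+\pi^{-2}}$ rather than a cruder triangle-inequality bound.
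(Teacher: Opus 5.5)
Your proposal is correct and follows the paper's proof essentially step for step. You obtain the projection property from the discrete Hodge splitting and the definition of $R_{n,k}^{l+1}$, commutation from $\dd\pi_{n,k}^l\omega=P_{\mathcal{B}_{n,k}^{l+1}}\dd\omega$ together with the discrete exactness $\mathcal{Z}_{n,k}^{l+1}=\mathcal{B}_{n,k}^{l+1}$, and the bound from the Pythagorean identity combined with \eqref{eq:right-inverse-bound}; your explicit assembly into $(1+\pi^{-2})\norm[H(\dd)]{\omega}^2$ and your remark on $l=d$ only spell out details the paper leaves implicit.
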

\begin{proof}
  We first verify the projection property. If
$v_h\in V_{n,k}^l$, then $\dd v_h\in\mathcal{B}_{n,k}^{l+1}$ and hence
$P_{\mathcal{B}_{n,k}^{l+1}}\dd v_h=\dd v_h$. The term $v_h-P_{\mathcal{Z}_{n,k}^l}v_h$ is contained in $\mathcal{Z}_{n,k}^{l,\perp}$ and $\dd(v_h-P_{\mathcal{Z}_{n,k}^l}v_h)=\dd v_h$ because
$P_{\mathcal{Z}_{n,k}^l}v_h$ is closed.
Therefore $v_h-P_{\mathcal{Z}_{n,k}^l}v_h$ is the unique element of $\mathcal{Z}_{n,k}^{l,\perp}$ selected by the right
inverse:
\[
  R_{n,k}^{l+1}\dd v_h=v_h-P_{\mathcal{Z}_{n,k}^l}v_h.
\]
Substitution into \eqref{eq:cochain-projection} gives
$\pi_{n,k}^lv_h=v_h$ for all $v_h\in V_{n,k}^l$.

Next, $P_{\mathcal{Z}_{n,k}^l}\omega$ is closed, while
$\dd R_{n,k}^{l+1}$ is the identity on $\mathcal{B}_{n,k}^{l+1}$.  Therefore
\begin{equation}
  \dd\pi_{n,k}^l\omega=P_{\mathcal{B}_{n,k}^{l+1}}\dd\omega.
  \label{eq:derivative-of-cochain-projection}
\end{equation}
In what follows, the discrete exactness
$\mathcal{Z}_{n,k}^{l+1}=\mathcal{B}_{n,k}^{l+1}$ implies
\[
  \pi_{n,k}^{l+1}\dd\omega
  =
  P_{\mathcal{Z}_{n,k}^{l+1}}\dd\omega+
  R_{n,k}^{l+2}P_{\mathcal{B}_{n,k}^{l+2}}\dd^2\omega
  =
  P_{\mathcal{B}_{n,k}^{l+1}}\dd\omega.
\]
Comparison with \eqref{eq:derivative-of-cochain-projection} proves $\dd\pi_{n,k}^l=\pi_{n,k}^{l+1}\dd$.

The Pythagorean theorem and
\eqref{eq:right-inverse-bound} yield
\[
\begin{aligned}
  \norm[L^2(\Omega)]{\pi_{n,k}^l\omega}^2
  &=
  \norm[L^2(\Omega)]{P_{\mathcal{Z}_{n,k}^l}\omega}^2+
  \norm[L^2(\Omega)]{R_{n,k}^{l+1}P_{\mathcal{B}_{n,k}^{l+1}}\dd\omega}^2\\
  &\le
  \norm[L^2(\Omega)]{\omega}^2+
  \pi^{-2}\norm[L^2(\Omega)]{\dd\omega}^2.
\end{aligned}
\]
Equation \eqref{eq:derivative-of-cochain-projection} also yields
\[
  \norm[L^2(\Omega)]{\dd\pi_{n,k}^l\omega}
  =
  \norm[L^2(\Omega)]{P_{\mathcal{B}_{n,k}^{l+1}}\dd\omega}
  \le\norm[L^2(\Omega)]{\dd\omega}.
\]
The proof is complete.
\end{proof}

Equivalently, the continuous and sparse discrete complexes are connected by
the following commuting projection diagram:
\[
\begin{tikzcd}[column sep=small,row sep=large]
  \R
    \arrow[r,hook]
    \arrow[d,"\id"']
  & H\Lambda^0(\Omega)
    \arrow[r,"\dd"]
    \arrow[d,"\pi_{n,k}^0"']
  & H\Lambda^1(\Omega)
    \arrow[r,"\dd"]
    \arrow[d,"\pi_{n,k}^1"']
  & \cdots
    \arrow[r,"\dd"]
  & H\Lambda^d(\Omega)
    \arrow[r]
    \arrow[d,"\pi_{n,k}^d"']
  & 0
  \\
  \R
    \arrow[r,hook]
  & V_{n,k}^0
    \arrow[r,"\dd"]
  & V_{n,k}^1
    \arrow[r,"\dd"]
  & \cdots
    \arrow[r,"\dd"]
  & V_{n,k}^d
    \arrow[r]
  & 0.
\end{tikzcd}
\]

By standard FEEC theory, Theorem \ref{thm:bounded-commuting-projection}
yields quasi-optimal convergence for sparse-grid mixed discretizations of
Hodge--Laplacian source problems, including mixed Poisson and magnetostatics.

\section{Numerical experiments}
\label{sec:numerics}

The experiments use the sparse-grid spaces \(V_{n,k}^1\) of 1-forms and their
full-grid counterparts \(V_{n,k}^{1,\mathrm{FG}}\) defined in
Section~\ref{sec:sparse-interpolation}, with \(k=0,1,2\). As is common in FEEC,
we use the identification between 1-forms and vector fields
\begin{equation*}
  \begin{aligned}
    \omega&=\sum_{i=1}^d u_i\,\dd x_i
    \ \longleftrightarrow\ \bm u=(u_1,\ldots,u_d),\\
    \dd\omega&\ \longleftrightarrow\ \curl\bm u=\left\{\begin{aligned}
      (\partial_{x_2}u_3-\partial_{x_3}u_2,\partial_{x_3}u_1-\partial_{x_1}u_3,\partial_{x_1}u_2-\partial_{x_2}u_1)\quad\text{in 3D},\\
      \partial_{x_1}u_2-\partial_{x_2}u_1\quad\text{in 2D}.
    \end{aligned}\right.
  \end{aligned}
\end{equation*}

Let $H_0(\curl;\Omega)$ be the subspace with essential boundary condition:
\[
H_0(\curl;\Omega)=\{\bm{v}\in L^2(\Omega): \curl\bm{v}\in L^2(\Omega),\, \bm{v}\times\bm{n}=\bm{0}\text{ on }\partial\Omega\},
\]
where $\bm{n}$ is the outward unit normal to $\partial\Omega$. We use
\[
  V_h^1
  =
  \begin{cases}
    V_{n,k}^1\cap H_0(\curl;\Omega),
      &\text{for a sparse-grid experiment},\\
    V_{n,k}^{1,\mathrm{FG}}\cap H_0(\curl;\Omega),
      &\text{for a full-grid experiment}.
  \end{cases}
\]
To treat the essential boundary condition, \(S_m^{k+1}\) is restricted
to \(S_m^{k+1}\cap H_0^1(0,1)\) and the discontinuous space \(Q_m^k\) is unchanged. In the crucial 1D hierarchical basis, all details in \(Y_m^{k+1}\) with \(m\ge1\) already vanish at the endpoints. Therefore,
only the level-zero basis \(\theta_{0,\star}^{k+1}=1\) and
\(\theta_{0,0}^{k+1}=x\) are dropped. 

We shall record the fitted order $r$ of convergence such that the $H(\rm curl)$ error is approximately $O(N_{\rm dof}^{-r})$, where $N_{\rm dof}$ is the number of DoFs in sparse- or full-grid methods.

\subsection{Two- and three-dimensional source problems}

We solve
\begin{equation*}
  \curl\curl\bm u+\bm u=\bm f\quad\text{in }\Omega,
  \qquad
  \bm u\times\bm n=0\quad\text{on }\partial\Omega.
\end{equation*}
Here $\bm n$ is the outward unit normal on $\partial\Omega$.
The finite element method seeks $\bm u_h\in V_h^1\subseteq H_0(\curl;\Omega)$ such
that
\begin{equation*}
  \inner[L^2(\Omega)]{\curl\bm u_h}{\curl\bm v_h}
  +\inner[L^2(\Omega)]{\bm u_h}{\bm v_h}
  =\inner[L^2(\Omega)]{\bm f}{\bm v_h}
  \qquad\forall\bm v_h\in V_h^1.
\end{equation*}
The reported quantity is the graph error
\[
  \norm[H(\curl;\Omega)]{\bm u-\bm u_h}
  =
  \left(
  \norm[L^2(\Omega)]{\bm u-\bm u_h}^2+
  \norm[L^2(\Omega)]{\curl(\bm u-\bm u_h)}^2
  \right)^{1/2}.
\]

On the unit square, the manufactured solution is
\begin{equation*}
  \bm u(x,y)
  =
  \begin{pmatrix}\sin(\pi y)\\ \sin(\pi x)\end{pmatrix},
  \qquad
  \bm f=(\pi^2+1)\bm u.
\end{equation*}
\begin{figure}[!htbp]
  \centering
  \includegraphics[width=0.66\linewidth]{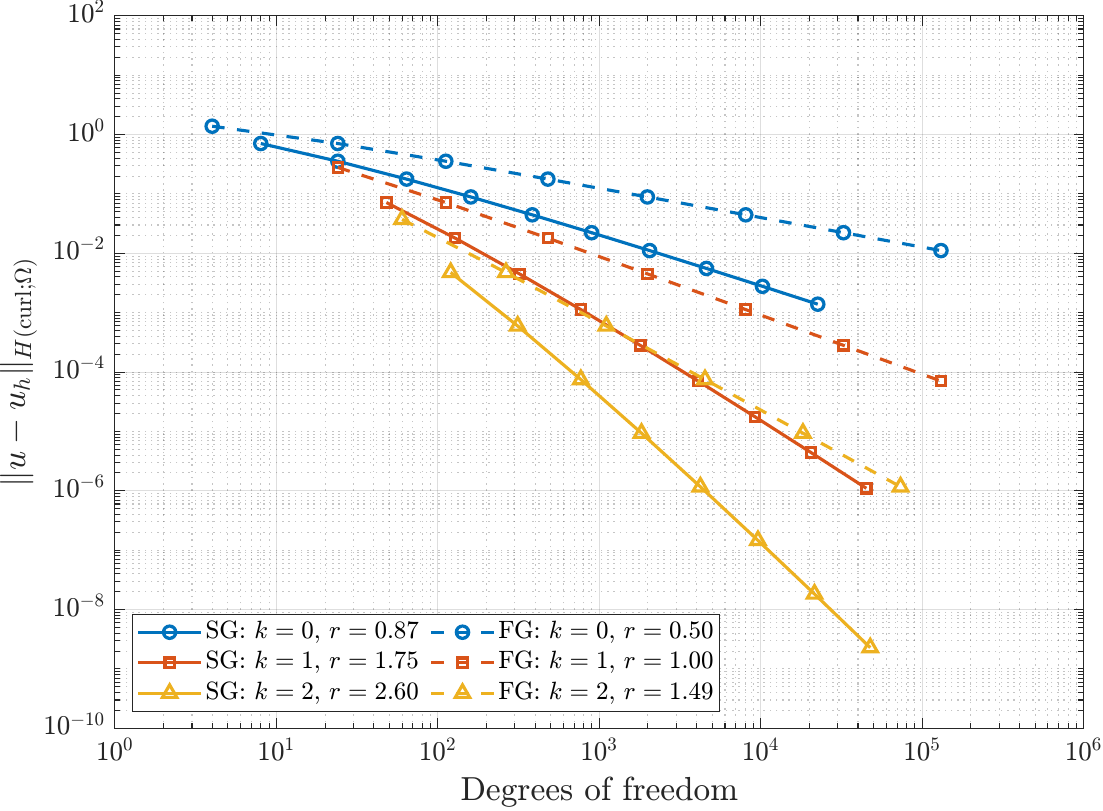}
  \caption{$H(\curl)$ error for the 2D source problem.
  Solid and dashed curves are sparse-grid and full-grid
  results, respectively.}
  \label{fig:source2d}
\end{figure}

On the unit cube, we use
\begin{equation*}
  \bm u(x,y,z)
  =
  \begin{pmatrix}
    \sin(\pi y)\sin(\pi z)\\
    \sin(\pi x)\sin(\pi z)\\
    \sin(\pi x)\sin(\pi y)
  \end{pmatrix},
  \qquad
  \bm f=(2\pi^2+1)\bm u.
\end{equation*}
Both fields are divergence free, satisfy the homogeneous tangential trace,
and obey the stated curl-curl identities.
\begin{figure}[!htbp]
  \centering
  \includegraphics[width=0.66\linewidth]{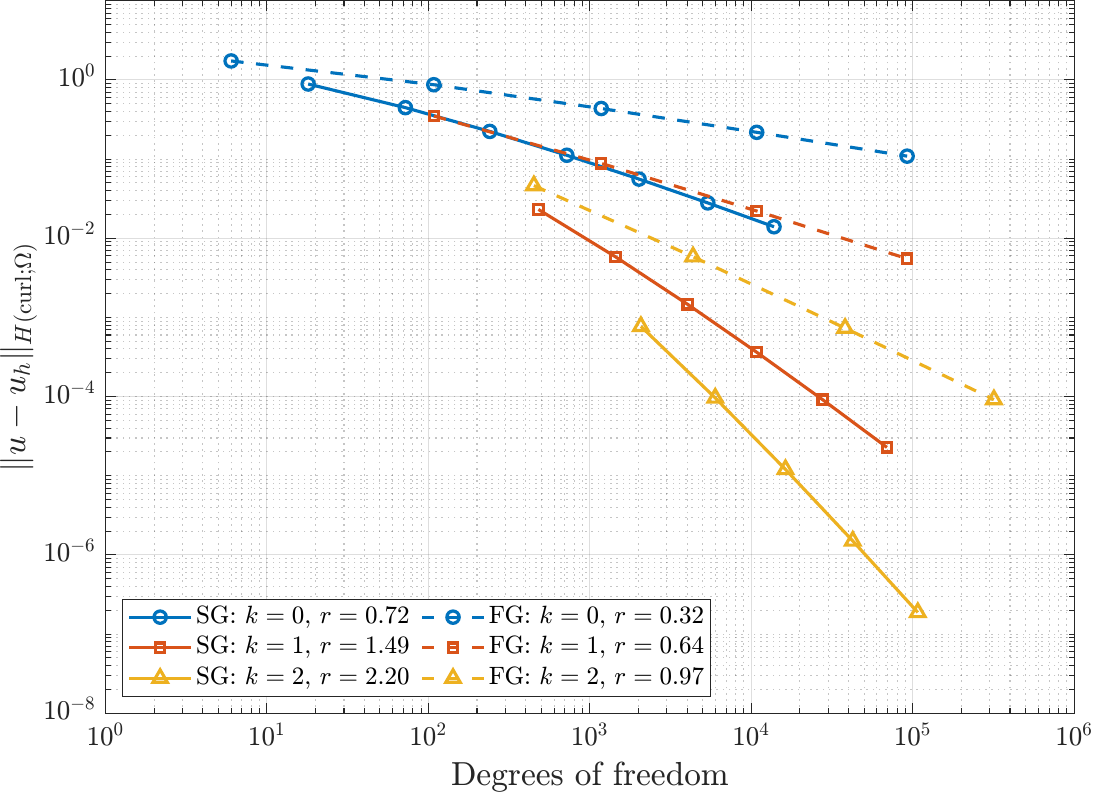}
  \caption{$H(\curl)$ error for the 3D source problem.
  Solid and dashed curves are sparse-grid and full-grid
  results, respectively.}
  \label{fig:source3d}
\end{figure}

Convergence histories are shown in Figures~\ref{fig:source2d} and~\ref{fig:source3d}.  For a smooth
solution,
Corollary~\ref{cor:graph-approximation} predicts a sparse-grid slope close to $k+1$
against degrees of freedom, up to logarithmic factors.  The corresponding
full-grid slope is $(k+1)/d$.  The measured full-grid rates agree closely
with these values.  The sparse rates increase by approximately one whenever
$k$ is increased by one; their modest deficit from $k+1$ is consistent with
the logarithmic factor and the finite level range.  


\subsection{Maxwell eigenproblem on a square with a hole} Although our analysis is performed only on a cube, it is instructive to test the numerical performance of the proposed sparse grid methods on domains with nontrivial topology. Let the computational domain be the square-annulus
\[
  \Omega_1
  =(0,1)^2\setminus[3/8,5/8]^2.
\]
The domain is partitioned into eight nonoverlapping rectangles.  Each box
carries its own sparse or full tensor hierarchy; tangential trace moments
are glued together on common interfaces, while the tangential degrees of
freedom on the outer and inner physical boundaries are set to zero.  We
compute the first ten positive eigenvalues of
\begin{equation*}
  \curl\curl\bm u=\lambda\bm u\quad\text{in }\Omega_1,
  \qquad
  \bm u\times\bm n=0\quad\text{on }\partial\Omega_1.
\end{equation*}

To remove zero eigenvalues associated with the large kernel of $\curl$, we introduce a Lagrange multiplier $p_h$ in the finite element space of 0-forms:
\[
  V_h^0
  =
  \begin{cases}
    V_{n,k}^0\cap H_0^1(\Omega_1),
      &\text{for a sparse-grid experiment},\\
    V_{n,k}^{0,\mathrm{FG}}\cap H_0^1(\Omega_1),
      &\text{for a full-grid experiment}.
  \end{cases}
\]
The finite element method seeks $\bm{0}\neq\bm{u}_h\in V_h^1$, $p_h\in V_h^0$ and $\lambda_h\geq0$ such that
\begin{align*}
  \inner[L^2(\Omega_1)]{\curl\bm u_h}{\curl\bm v_h}
  +\inner[L^2(\Omega_1)]{\bm v_h}{{\rm grad} p_h}
  &=\lambda_h\inner[L^2(\Omega_1)]{\bm u_h}{\bm v_h}
  \quad\forall\bm v_h\in V_h^1,\\
  \inner[L^2(\Omega_1)]{\bm u_h}{{\rm grad}q_h}&=0,\quad\forall q_h\in V_h^0.
\end{align*}
This treatment of the gradient kernel is standard in conforming edge-element
approximations of Maxwell eigenproblems
\citep{BoffiFernandesGastaldiPerugia1999,Boffi2010}. The eigenvalue $\lambda_h=0$ has multiplicity one and  corresponds to
a discrete harmonic vector field on the square annulus \cite{Li2024FoCM}. The ten smallest remaining positive eigenvalues are reported.

No closed form is available for the complete spectrum on this domain.  A
full-grid computation with $k=3$ and $n=7$ is used to produce the reference eigenvalues.  Its
first ten positive eigenvalues are listed in
Table~\ref{tab:reference-eigenvalues}.  If $\lambda_{h,j}$ denotes the $j$th
computed positive eigenvalue, the plotted error is
\begin{equation*}
  \max_{1\le j\le10}|\lambda_{h,j}-\lambda_j^{\mathrm{ref}}|/\lambda_j^{\mathrm{ref}}.
\end{equation*}

\begin{table}[!htbp]
  \centering
  \caption{Reference positive eigenvalues for the square-annulus problem
  ($k=3$, full-grid level $n=7$).}
  \label{tab:reference-eigenvalues}
  \begin{tabular}{r@{\qquad}r r@{\qquad}r}
    \toprule
    $j$ & $\lambda_j^{\mathrm{ref}}$ &
    $j$ & $\lambda_j^{\mathrm{ref}}$\\
    \midrule
    1 & 7.7647 & 6  & 46.194\\
    2 & 7.7647 & 7  & 46.793\\
    3 & 19.403 & 8 & 83.887\\
    4 & 36.765 & 9 & 83.887\\
    5 & 46.194 & 10 & 95.429\\
    \bottomrule
  \end{tabular}
\end{table}

\begin{figure}[!htbp]
  \centering
  \includegraphics[width=0.62\linewidth]{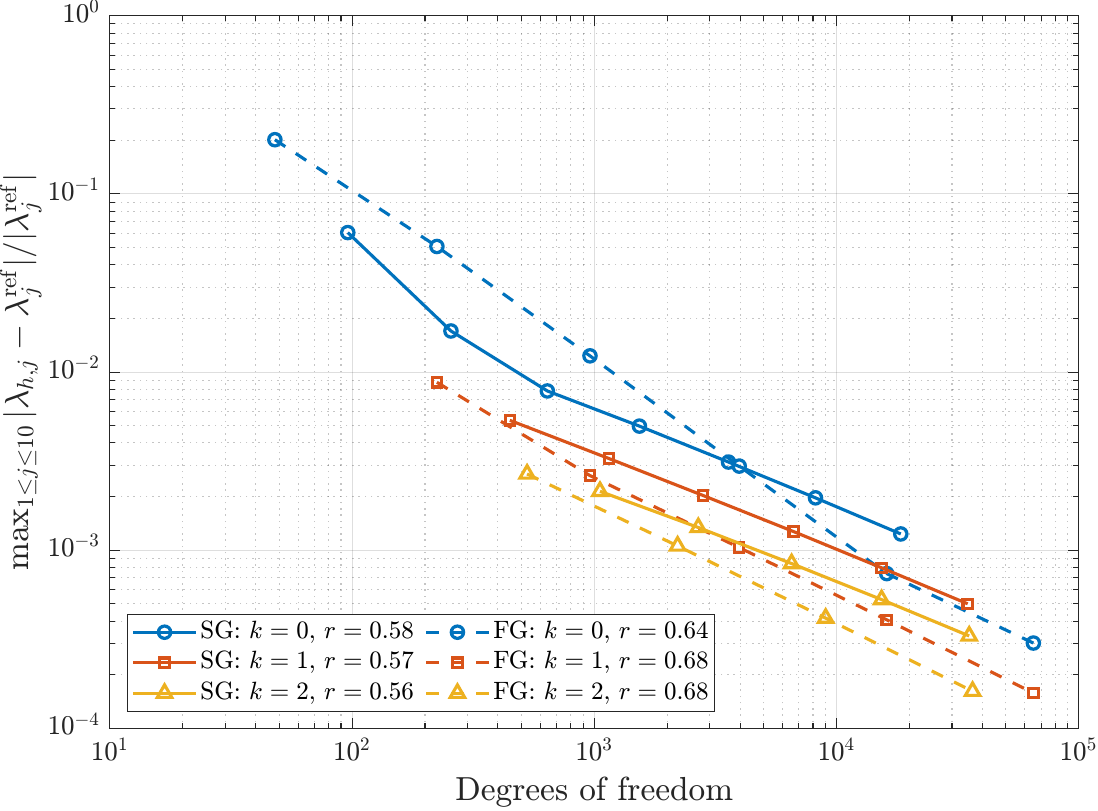}
  \caption{Maximum relative eigenvalue error among the first ten positive eigenvalues
  on the square annulus.  Solid curves denote sparse grids and dashed curves
  full grids.}
  \label{fig:eigen2d}
\end{figure}

Figure~\ref{fig:eigen2d} shows substantially weaker order dependence than the
smooth source experiments.  The fitted sparse-grid slopes are approximately
$0.55$--$0.58$, and the full-grid slopes approximately $0.64$--$0.68$ over
the displayed ranges.  Increasing $k$ mainly lowers the error constant and
does not materially change the slope.  This behavior is consistent with the
limited regularity of Maxwell eigenfunctions near the four reentrant inner
corners \citep[see, e.g.,][]{Monk2003,Boffi2010}.  Sparse approximation
additionally relies on mixed regularity, so
corner singularities can reduce its advantage relative to a full tensor
grid.  The experiment therefore illustrates that polynomial degree alone
cannot recover high-order convergence when the eigenspace lacks the
corresponding Sobolev and mixed Sobolev regularity.

\section{Concluding remarks}
\label{sec:conclusion}


We have constructed higher-order finite element differential forms on dyadic
tensor-product sparse grids using compatible one-dimensional Alpert multiwavelet
and integrated-Alpert hierarchies linked by differentiation. Tensorization
over a common downward-closed index set yields a conforming discrete de Rham
subcomplex and a commuting sparse-grid interpolant in arbitrary dimension and
form degree, with sparse-grid approximation error estimates under mixed Sobolev regularity. On the unit cube, an explicit
tensor-product homotopy preserves the sparse-grid spaces, proving discrete
exactness and yielding potentials uniformly stable in both grid level and
polynomial degree, and hence an $H(\dd)$-bounded commuting projection.

The numerical source experiments in two and three dimensions exhibit the predicted
order of convergence. For the square-annulus Maxwell eigenproblem, however,
reentrant-corner singularities limit the regularity of the eigenspaces and
substantially reduce the benefit of increasing the polynomial degree. Extensions
to mapped geometries and adaptive sparse-grid methods 
under low regularity remain natural directions for future work.

\section*{Declaration of AI use}
During the preparation of this work, the authors used ChatGPT to polish the writing,
generate code, and conduct computational experiments. The authors reviewed and edited all generated output and take full responsibility for the content of the paper.

\begingroup
\small
\bibliographystyle{plainnat}

\begin{thebibliography}{33}
\providecommand{\natexlab}[1]{#1}
\providecommand{\url}[1]{\texttt{#1}}
\expandafter\ifx\csname urlstyle\endcsname\relax
  \providecommand{\doi}[1]{doi: #1}\else
  \providecommand{\doi}{doi: \begingroup \urlstyle{rm}\Url}\fi

\bibitem[Alpert(1993)]{Alpert1993}
Bradley~K. Alpert.
\newblock A class of bases in {$L^2$} for the sparse representation of integral
  operators.
\newblock \emph{SIAM Journal on Mathematical Analysis}, 24\penalty0
  (1):\penalty0 246--262, 1993.
\newblock \doi{10.1137/0524016}.

\bibitem[Arnold(2018)]{Arnold2018}
Douglas~N. Arnold.
\newblock \emph{Finite Element Exterior Calculus}, volume~93 of \emph{CBMS-NSF
  Regional Conference Series in Applied Mathematics}.
\newblock Society for Industrial and Applied Mathematics, Philadelphia, 2018.
\newblock \doi{10.1137/1.9781611975543}.

\bibitem[Arnold and Awanou(2011)]{ArnoldAwanou2011}
Douglas~N. Arnold and Gerard Awanou.
\newblock The serendipity family of finite elements.
\newblock \emph{Foundations of Computational Mathematics}, 11\penalty0
  (3):\penalty0 337--344, 2011.
\newblock \doi{10.1007/s10208-011-9087-3}.

\bibitem[Arnold and Awanou(2014)]{ArnoldAwanou2014}
Douglas~N. Arnold and Gerard Awanou.
\newblock Finite element differential forms on cubical meshes.
\newblock \emph{Mathematics of Computation}, 83\penalty0 (288):\penalty0
  1551--1570, 2014.
\newblock \doi{10.1090/S0025-5718-2013-02783-4}.

\bibitem[Arnold et~al.(2006)Arnold, Falk, and Winther]{ArnoldFalkWinther2006}
Douglas~N. Arnold, Richard~S. Falk, and Ragnar Winther.
\newblock Finite element exterior calculus, homological techniques, and
  applications.
\newblock \emph{Acta Numerica}, 15:\penalty0 1--155, 2006.
\newblock \doi{10.1017/S0962492906210018}.

\bibitem[Arnold et~al.(2010)Arnold, Falk, and Winther]{ArnoldFalkWinther2010}
Douglas~N. Arnold, Richard~S. Falk, and Ragnar Winther.
\newblock Finite element exterior calculus: From {H}odge theory to numerical
  stability.
\newblock \emph{Bulletin of the American Mathematical Society}, 47\penalty0
  (2):\penalty0 281--354, 2010.
\newblock \doi{10.1090/S0273-0979-10-01278-4}.

\bibitem[Arnold et~al.(2015)Arnold, Boffi, and
  Bonizzoni]{ArnoldBoffiBonizzoni2015}
Douglas~N. Arnold, Daniele Boffi, and Francesca Bonizzoni.
\newblock Finite element differential forms on curvilinear cubic meshes and
  their approximation properties.
\newblock \emph{Numerische Mathematik}, 129\penalty0 (1):\penalty0 1--20, 2015.
\newblock \doi{10.1007/s00211-014-0631-3}.

\bibitem[Berglund(2014)]{Berglund2014}
Alexander Berglund.
\newblock Homological perturbation theory for algebras over operads.
\newblock \emph{Algebraic \& Geometric Topology}, 14\penalty0 (5):\penalty0
  2511--2548, 2014.
\newblock \doi{10.2140/agt.2014.14.2511}.

\bibitem[Boffi(2010)]{Boffi2010}
Daniele Boffi.
\newblock Finite element approximation of eigenvalue problems.
\newblock \emph{Acta Numerica}, 19:\penalty0 1--120, 2010.
\newblock \doi{10.1017/S0962492910000012}.

\bibitem[Boffi et~al.(1999)Boffi, Fernandes, Gastaldi, and
  Perugia]{BoffiFernandesGastaldiPerugia1999}
Daniele Boffi, Pedro Fernandes, Lucia Gastaldi, and Ilaria Perugia.
\newblock Computational models of electromagnetic resonators: Analysis of edge
  element approximation.
\newblock \emph{SIAM Journal on Numerical Analysis}, 36\penalty0 (4):\penalty0
  1264--1290, 1999.
\newblock \doi{10.1137/S003614299731853X}.

\bibitem[Boffi et~al.(2011)Boffi, Costabel, Dauge, Demkowicz, and
  Hiptmair]{Boffi2011}
Daniele Boffi, Martin Costabel, Monique Dauge, Leszek Demkowicz, and Ralf
  Hiptmair.
\newblock Discrete compactness for the p-version of discrete differential
  forms.
\newblock \emph{SIAM J. Numer. Anal.}, 49\penalty0 (1):\penalty0 135--158,
  2011.
\newblock \doi{10.1137/090772629}.

\bibitem[Bungartz and Griebel(2004)]{BungartzGriebel2004}
Hans-Joachim Bungartz and Michael Griebel.
\newblock Sparse grids.
\newblock \emph{Acta Numerica}, 13:\penalty0 147--269, 2004.
\newblock \doi{10.1017/S0962492904000182}.

\bibitem[Christiansen et~al.(2011)Christiansen, Munthe-Kaas, and
  Owren]{ChristiansenMuntheKaasOwren2011}
Snorre~H. Christiansen, Hans~Z. Munthe-Kaas, and Brynjulf Owren.
\newblock Topics in structure-preserving discretization.
\newblock \emph{Acta Numerica}, 20:\penalty0 1--119, 2011.
\newblock \doi{10.1017/S096249291100002X}.

\bibitem[D'Azevedo et~al.(2020)D'Azevedo, Green, and Mu]{DAzevedoGreenMu2020}
Eduardo~F. D'Azevedo, David~L. Green, and Lin Mu.
\newblock Discontinuous {G}alerkin sparse grids methods for time domain
  {Maxwell}'s equations.
\newblock \emph{Computer Physics Communications}, 256:\penalty0 107412, 2020.
\newblock \doi{10.1016/j.cpc.2020.107412}.

\bibitem[D{\~u}ng et~al.(2018)D{\~u}ng, Temlyakov, and
  Ullrich]{DungTemlyakovUllrich2018}
Dinh D{\~u}ng, Vladimir~N. Temlyakov, and Tino Ullrich.
\newblock \emph{Hyperbolic Cross Approximation}.
\newblock Advanced Courses in Mathematics---CRM Barcelona. Birkh{\"a}user,
  Cham, 2018.
\newblock \doi{10.1007/978-3-319-92240-9}.

\bibitem[Gillette et~al.(2019)Gillette, Kloefkorn, and
  Sanders]{GilletteKloefkornSanders2019}
Andrew Gillette, Tyler Kloefkorn, and Victoria Sanders.
\newblock Computational serendipity and tensor product finite element
  differential forms.
\newblock \emph{SMAI Journal of Computational Mathematics}, 5:\penalty0 1--21,
  2019.
\newblock \doi{10.5802/smai-jcm.41}.

\bibitem[Gr\u{a}dinaru and Hiptmair(2003{\natexlab{a}})]{GradinaruHiptmair2003}
Vasile Gr\u{a}dinaru and Ralf Hiptmair.
\newblock Mixed finite elements on sparse grids.
\newblock \emph{Numerische Mathematik}, 93\penalty0 (3):\penalty0 471--495,
  2003{\natexlab{a}}.
\newblock \doi{10.1007/s002110100382}.

\bibitem[Gr\u{a}dinaru and
  Hiptmair(2003{\natexlab{b}})]{GradinaruHiptmair2003Multigrid}
Vasile Gr\u{a}dinaru and Ralf Hiptmair.
\newblock Multigrid for discrete differential forms on sparse grids.
\newblock \emph{Computing}, 71\penalty0 (1):\penalty0 17--42,
  2003{\natexlab{b}}.
\newblock \doi{10.1007/s00607-003-0008-4}.

\bibitem[Gr\u{a}dinaru(2002)]{Gradinaru2002}
Vasile~Catrinel Gr\u{a}dinaru.
\newblock \emph{Whitney Elements on Sparse Grids}.
\newblock PhD thesis, Universit{\"a}t T{\"u}bingen, 2002.

\bibitem[Gugenheim et~al.(1991)Gugenheim, Lambe, and
  Stasheff]{GugenheimLambeStasheff1991}
V.~K. A.~M. Gugenheim, Larry~A. Lambe, and James~D. Stasheff.
\newblock Perturbation theory in differential homological algebra {II}.
\newblock \emph{Illinois Journal of Mathematics}, 35\penalty0 (3):\penalty0
  357--373, 1991.
\newblock \doi{10.1215/ijm/1255987784}.

\bibitem[Guo and Cheng(2016)]{GuoCheng2016}
Wei Guo and Yingda Cheng.
\newblock A sparse grid discontinuous {G}alerkin method for high-dimensional
  transport equations and its application to kinetic simulations.
\newblock \emph{SIAM Journal on Scientific Computing}, 38\penalty0
  (6):\penalty0 A3381--A3409, 2016.
\newblock \doi{10.1137/16M1060017}.

\bibitem[Guo and Cheng(2017)]{GuoCheng2017}
Wei Guo and Yingda Cheng.
\newblock An adaptive multiresolution discontinuous {G}alerkin method for
  time-dependent transport equations in multidimensions.
\newblock \emph{SIAM Journal on Scientific Computing}, 39\penalty0
  (6):\penalty0 A2962--A2992, 2017.
\newblock \doi{10.1137/16M1083190}.

\bibitem[Hiptmair(1999)]{Hiptmair1999}
Ralf Hiptmair.
\newblock Canonical construction of finite elements.
\newblock \emph{Mathematics of Computation}, 68\penalty0 (228):\penalty0
  1325--1346, 1999.
\newblock \doi{10.1090/S0025-5718-99-01166-7}.

\bibitem[Li(2024)]{Li2024FoCM}
Yuwen Li.
\newblock Nodal auxiliary space preconditioning for the surface de {R}ham
  complex.
\newblock \emph{Found. Comput. Math.}, 24:\penalty0 1019--1048, 2024.
\newblock \doi{10.1007/s10208-023-09611-0}.

\bibitem[Li and Zhang(2025)]{LiZhang2025}
Yuwen Li and Guozhi Zhang.
\newblock Higher order approximation rates for {ReLU} {CNN}s in {K}orobov
  spaces.
\newblock \emph{arXiv preprint}, page arXiv:2501.11275, 2025.

\bibitem[Li and Zhang(2026)]{LiZhang2026}
Yuwen Li and Guozhi Zhang.
\newblock Some super-approximation rates of {ReLU} neural networks for
  {K}orobov functions.
\newblock \emph{Commun. Math. Sci.}, 24\penalty0 (8):\penalty0 2159--2186,
  2026.

\bibitem[Mao and Zhou(2022)]{MaoZhou2022}
Tong Mao and Ding-Xuan Zhou.
\newblock Approximation of functions from {K}orobov spaces by deep
  convolutional neural networks.
\newblock \emph{Advances in Computational Mathematics}, 48\penalty0
  (6):\penalty0 84, 2022.
\newblock \doi{10.1007/s10444-022-09991-x}.

\bibitem[Monk(2003)]{Monk2003}
Peter Monk.
\newblock \emph{Finite Element Methods for Maxwell's Equations}.
\newblock Oxford University Press, Oxford, 2003.
\newblock \doi{10.1093/acprof:oso/9780198508885.001.0001}.

\bibitem[Montanelli and Du(2019)]{MontanelliDu2019}
Hadrien Montanelli and Qiang Du.
\newblock New error bounds for deep {ReLU} networks using sparse grids.
\newblock \emph{SIAM Journal on Mathematics of Data Science}, 1\penalty0
  (1):\penalty0 78--92, 2019.
\newblock \doi{10.1137/18M1189336}.

\bibitem[N{\'e}d{\'e}lec(1980)]{Nedelec1980}
Jean-Claude N{\'e}d{\'e}lec.
\newblock Mixed finite elements in {$\mathbb R^3$}.
\newblock \emph{Numerische Mathematik}, 35:\penalty0 315--341, 1980.
\newblock \doi{10.1007/BF01396415}.

\bibitem[Shen and Yu(2010)]{ShenYu2010}
Jie Shen and Haijun Yu.
\newblock Efficient spectral sparse grid methods and applications to
  high-dimensional elliptic problems.
\newblock \emph{SIAM Journal on Scientific Computing}, 32\penalty0
  (6):\penalty0 3228--3250, 2010.
\newblock \doi{10.1137/100787842}.

\bibitem[Tao et~al.(2019)Tao, Guo, and Cheng]{TaoGuoCheng2019}
Zhanjing Tao, Wei Guo, and Yingda Cheng.
\newblock Sparse grid discontinuous {G}alerkin methods for the
  {Vlasov--Maxwell} system.
\newblock \emph{Journal of Computational Physics: X}, 3:\penalty0 100022, 2019.
\newblock \doi{10.1016/j.jcpx.2019.100022}.

\bibitem[Wang et~al.(2016)Wang, Tang, Guo, and Cheng]{WangTangGuoCheng2016}
Zixuan Wang, Qi~Tang, Wei Guo, and Yingda Cheng.
\newblock Sparse grid discontinuous {G}alerkin methods for high-dimensional
  elliptic equations.
\newblock \emph{Journal of Computational Physics}, 314:\penalty0 244--263,
  2016.
\newblock \doi{10.1016/j.jcp.2016.03.005}.

\end{thebibliography}

\endgroup

\end{document}